\numberwithin{equation}{section}
\theoremstyle{plain}
\newtheorem{theorem}{Theorem}[section]
\newtheorem{lemma}[theorem]{Lemma}
\theoremstyle{definition}
\newtheorem{definition}[theorem]{Definition}
\theoremstyle{remark}
\newtheorem{remark}[theorem]{Remark}
\def\({\left(}
\def\){\right)}
\DeclareMathOperator{\tr}{tr}
\def \R {\mathbb{R}}
\def \tran {\mathsf{T}}
\def \x {{\bm{x}}}
\def \i {{\mathbf{i}}}
\def \j {{\mathbf{j}}}
\def \k {{\mathbf{k}}}
\def \l {{\mathbf{l}}}
\def \s {{\mathbf{s}}}
\def \I {{\mathbf{I}}}
\def \J {{\mathbf{J}}}
\def \K {{\mathbf{K}}}
\def \L {{\mathbf{L}}}
\def \RR {\mathcal{R}}
\DeclareMathOperator{\E}{\mathsf{E}}
\DeclareMathOperator{\Var}{\mathsf{Var}}
\def \Sdiag {S_{\textrm{diag}}}
\def \Soffdiag {S_{\textrm{off}}}
\begin{document}

\title{Marchenko-Pastur law with relaxed independence conditions }

\author{Jennifer Bryson}
\address{Department of Mathematics, University of California, Irvine,
Irvine, CA. 92697}
\email{jabryson@uci.edu}
\author{Roman Vershynin}
\address{Department of Mathematics, University of California, Irvine,
Irvine, CA. 92697}
\email{rvershyn@uci.edu}
\author{Hongkai Zhao}
\address{Department of Mathematics, Duke University, Durham, NC 27708}
\email{zhao@math.duke.edu}

\thanks{Jennifer Bryson was partially supported by NSF Graduate Research Fellowship Program DGE-1321846. Roman Vershynin is supported by USAF Grant FA9550-18-1-0031, NSF Grants DMS 1954233 and DMS 2027299, and U.S. Army Grant 76649-CS.  Hongkai Zhao was partially supported by NSF grants DMS-1622490 and DMS-1821010.}

\begin{abstract}
We prove the Marchenko-Pastur law for the eigenvalues of $p \times p$ sample covariance matrices
in two new situations where the data does not have independent coordinates. 
In the first scenario -- the block-independent model -- the $p$ coordinates of the data 
are partitioned into blocks
in such a way that the entries in different blocks are independent, but 
the entries from the same block may be dependent.
In the second scenario -- the random tensor model -- the data is 
the homogeneous random tensor of order $d$, 
i.e. the coordinates of the data are all $\binom{n}{d}$ different products of $d$ variables
chosen from a set of $n$ independent random variables. 
We show that Marchenko-Pastur law holds for the block-independent model 
as long as the size of the largest block is $o(p)$,
and for the random tensor model as long as $d = o(n^{1/3})$.
Our main technical tools are new concentration inequalities for quadratic forms in 
random variables with block-independent coordinates, and for random tensors. 
\end{abstract}

\maketitle

\section{Introduction}

\subsection{Marchenko-Pastur law}
Consider a $p \times m$ random matrix $X$ with independent entries that have zero mean and unit variance. 
The limiting distribution of eigenvalues $\lambda_i(W)$ of the sample covariance matrix $W = \frac{1}{m} XX^\tran$ is determined by the celebrated Marchenko-Pastur law \cite{MP}. This result is valid in the regime where the dimensions of $X$ increase to infinity but the aspect ratio converges to a constant, i.e. $p \to \infty$ and $p/m \to \lambda \in (0,\infty)$. Then, with probability $1$, the empirical spectral distribution of 
the $p \times p$ matrix $W$ converges weakly to a deterministic distribution that is now called the Marchenko-Pastur law with parameter $\lambda$. More specifically, if $\lambda \in (0,1)$, then with probability $1$ the following holds for each $x \in \R$: 
$$
F^{W}(x) := \frac{1}{p} \; \# \{ 1 \le i \le p:\; \lambda_i(W) \le x \}
\to 
\int_\infty^x f_\lambda(t)\; dt
$$
where $f_\lambda$ is the Marchenko-Pastur density 
\begin{equation}	\label{eq: MP}
f_\lambda(x) = \frac{1}{2 \pi \lambda x} \sqrt{ \big[ (\lambda_+ - x) (x - \lambda_-) \big]_+ }, 
\quad \text{with} \quad
\lambda_\pm = (1 \pm \sqrt{\lambda})^2.
\end{equation}
A similar result also holds for $\lambda>1$, but in that case the the limiting distribution has an additional point mass of $1-1/\lambda$ at the origin.  
A straightforward proof of the Marchenko-Pastur law using the Stieltjes transform is given in Chapter 3 of \cite{textbook}.  More extensive expositions of the Marchenko-Pastur law with proofs using both the moment method and the Stieltjes transform are given in  \cite[Chapter 3]{Bai_book} and \cite{Bai_review}.  Furthermore, \cite{Bai_review} includes a review of many existing works prior to 1999.

\subsection{Relaxing independence}
In many data sets it is natural to have independent columns, but not independent entries in the same column.  For example, data collected from people, such as patient health information or personal movie ratings, will have independent columns since it is reasonable to assume each person's responses are independent of everyone else's responses.  However, entries within a column are most likely not independent.

Several papers relaxing the independence within the columns already exist.  Yin and Krishnaiah \cite{Yin1} required the independent columns $X_k$, to come from a spherically symmetric distribution; specifically, they require the distribution of $X_k$ to be the same as that of $PX_k$ where $P$ is an orthogonal matrix.  Aubrun \cite{Aubrun} allowed $X_k$ to be distributed uniformly on the $l_p^m$ ball.  That result was extended 
by Pajor and Pastur \cite{PajorPastur} for all isotropic log-concave measures.  
Following Hui and Pan \cite{Hui-Pan}, Wei, Yang and Ying \cite{Wei-Yang-Ying}
considered independent columns $X_k$ with $m(k)$-dependent stationary entries as long as the length of $X_k$ is $ O([m(k)]^4)$. Hofmann-Credner and Stolz \cite{Hoffmann-Stolz}
and Friesen, L\"owe and M. Stolz \cite{Friesen-Lowe-Stolz}
assumed that the entries of $X$ can be partitioned into independent subsets, 
while allowing the entries from the same subset to be dependent. 
G\"otze and Tikhomirov in \cite{GotzeTik1} and \cite{GotzeTik2} replace the independence assumptions entirely with certain martingale-type conditions.  In a similar manner, Adamczak \cite{Adamczak} showed that Marchenko-Pastur law holds if the Euclidean norms of the rows and columns of $X$ concentrate around their means and the expectation of each entry of $X$ conditioned on all other entries equals zero. Bai and Zhou \cite{BaiZhou} gave a sufficient condition in terms of concentration of quadratic forms, and Yao \cite{Yao} used their condition to allow a time series dependence structure in $X$. 
Yaskov \cite{Yaskov_short} gave a short proof of the isotropic case with a slightly weaker condition of the concentration of quadratic forms than Bai and Zhou's result.  For the anisotropic case, Yaskov \cite{Yaskov_anisotropic} reduced to the case of Gaussian matrices, and in the latter case Pastur and Shcherbina show in section 7.2 of \cite{Pastur-Shcherbina} that Bai and Zhou's result holds without assuming that $\| \Sigma^{(p)} \| = O(1)$ (if the limiting distribution $H$ is a probability distribution) and $\max_k \Var(X_k^T A^{(p)}X_k) = o(p^2 \| A^{(p)} \|^2)$. 
 O'Rourke \cite{ORourke} considered a class of random matrices with dependent entries where even the columns are not necessarily independent, but are uncorrelated; although columns that are far enough apart must be independent.  Lastly, the papers \cite{Bryc}, \cite{Banerjee}, and \cite{Loubaton} consider structured matrices such as block Toeplitz, Hankel, and Markov matrices.

\bigskip

In this paper, we study two random matrix models with relaxed independence requirement.
In our first model, we consider matrices with independent columns and each column is partitioned into blocks, and we only require the entries in different blocks to be independent.

\begin{definition}[Block-independent model]		\label{def: block-independent}
 Consider a mean zero, isotropic\footnote{Isotropy means that the covariance matrix of $x$ is identity, i.e. $\E xx^\tran = I$.
    The isotropy assumption is convenient but not essential, and we show how to remove it in Section~\ref{s: correlations}.} 
  random vector $x \in \mathbb R^{p}$.
  Assume that the entries of $x$ can be partitioned into blocks each of length $d_k$, 
  in such a way that the entries in different blocks are independent.  
  (The entries from the same block may be dependent.)
  Then we say that $x$ follows the block-independent model. 
\end{definition}

The block-independent data structures arise naturally in many situations.  For example Netflix's movie recommendation data set contains ratings of movies by many people.  A single person's movie ratings are likely to have a block structure coming from different movie genres, i.e. someone who dislikes documentary movies will have a block of poor ratings, etc.  Another example of such a block structure is the stock market.  The Marchenko-Pastur law assuming independence among all entries has been used as a comparison to the empirical spectral distribution of daily stock prices, see \cite{Pot1, finance}.  However, a block structure is more realistic, since for each day the performance of stocks in the same sector of the market are likely to be correlated and stocks in different sectors can be considered to be independent.  

\bigskip

In the second model we study, the independent columns of a random matrix 
are formed by vectorized independent symmetric random tensors. 

\begin{definition}[Random tensor model]		\label{def: random tensor}
  Consider an isotropic random vector $x \in \mathbb R ^n$ with independent entries. 
  Let the random vector $\x \in \mathbb R ^{\binom{n}{d}}$ 
  be obtained by vectorizing the symmetric tensor $x^{\otimes d}$.
  Thus, the entries of $\x$ are indexed by $d$-element subsets $\i \subset [n]$
  and are defined as products of the entries of $x$ over $\i$:
  $$
  \x_{\i} = \prod_{i \in \i} x_i = x_{i_1} x_{i_2} \cdots x_{i_d},
  \quad \i = \{i_1,\ldots,i_d\}.
  $$
  Then we say that $\x$ follows the random tensor model. 
\end{definition}

Although random tensors appear frequently in data science problems
\cite{Ambainis-Harrow-Hastings, Anima, Auffinger, BaldiVershynin, Ben Arous, Chen, Ge, Ghosh, Jain-Oh, Kim, Lytova,  Montanari-Sun, Nguyen, Richard-Montanari, V tensors, Wang, Zhou}, a systematic theory of random tensors is still in its infancy.

\subsection{New results}

In this paper, we generalize Marchenko-Pastur law to the two models of random matrices
described above. 
The following is our main result for the block-independence model (described in 
Definition~\ref{def: block-independent} above).

\begin{theorem}[Marchenko-Pastur law for the block-independent model] 	\label{thm: block-independent}
 Let $X = X^{(p)}$, $p=\sum_k d_k$, be a sequence of $p \times m$ random matrices, 
  whose columns are independent and follow the block-independent model with blocks of sizes $d_k = d_k(p)$,
  the aspect ratio $p/m$ converging to a number $\lambda \in (0, \infty)$  and 
  $\max_k d_k = o(p)$ as $p \to \infty$.
  Assume that all entries of the random matrix $X$ have uniformly bounded fourth moments.
  Then with probability $1$ the empirical spectral distribution of the sample covariance matrix 
  $W = \frac{1}{m} XX^\tran$ converges weakly in distribution to the 
  Marchenko-Pastur distribution with parameter $\lambda$.  
\end{theorem}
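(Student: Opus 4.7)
My plan is to reduce the theorem to a concentration inequality for a single block-independent vector, and then feed this into the Stieltjes-transform-based framework of Bai--Zhou and Yaskov already cited in the excerpt. Their result asserts that if the columns $X_k$ of $X$ are independent and isotropic, and a uniform bound
\[
\E \bigl|X_k^\tran A X_k - \tr A\bigr|^2 = o(p^2)
\]
holds for every deterministic $p \times p$ matrix $A$ with $\|A\| \le 1$, then the empirical spectral distribution of $\tfrac{1}{m} X X^\tran$ converges almost surely to the Marchenko--Pastur law with parameter $\lambda$. Thus the entire theorem is reduced to proving this quadratic-form concentration for a single column $x$ drawn from the block-independent model.

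To establish the concentration I partition $x$ into its independent blocks $x = (x^{(1)}, \dots, x^{(K)})$ of sizes $d_1, \dots, d_K$, and decompose $A$ into the corresponding subblocks $A_{kl}$. This gives
\[
x^\tran A x - \tr A
\;=\; \underbrace{\sum_{k=1}^{K} \bigl[(x^{(k)})^\tran A_{kk} x^{(k)} - \tr A_{kk}\bigr]}_{=:\,D}
\;+\; \underbrace{\sum_{k \ne l} (x^{(k)})^\tran A_{kl} x^{(l)}}_{=:\,O}.
\]
The off-diagonal part $O$ is handled cleanly by block independence together with the induced within-block isotropy $\E x^{(k)}(x^{(k)})^\tran = I_{d_k}$: when one expands $\E O^2$ as a sum over quadruples $(k,l,k',l')$, each term factors over independent blocks, and any factor that is linear in a single block has zero mean. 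Only the matched-index contributions $(k',l') = (k,l)$ and $(k',l') = (l,k)$ survive, and a short computation gives
\[
\E O^2 \;\le\; 2 \sum_{k \ne l} \|A_{kl}\|_F^2 \;\le\; 2\, \|A\|_F^2 \;\le\; 2p\, \|A\|^2 \;=\; o(p^2 \|A\|^2).
\]

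The delicate step is the diagonal contribution $D$, because the entries \emph{within} a single block are allowed to be arbitrarily dependent, which rules out Hanson--Wright-type inequalities. All that is available is the uniform fourth-moment hypothesis $\E X_{ij}^4 \le C$. Since distinct blocks are independent and each summand in $D$ is centered, $\E D^2 = \sum_k \Var\bigl((x^{(k)})^\tran A_{kk} x^{(k)}\bigr)$, and by Cauchy--Schwarz,
\[
\E \bigl((x^{(k)})^\tran A_{kk} x^{(k)}\bigr)^2
\;\le\; \|A_{kk}\|^2 \, \E \|x^{(k)}\|^4
\;\le\; C d_k^2 \, \|A\|^2,
\]
where $\E \|x^{(k)}\|^4 \le d_k^2 \max_i \E x_i^4 \le C d_k^2$. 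Summing and using $d_k \le \max_k d_k$ yields
\[
\E D^2 \;\le\; C \|A\|^2 \sum_k d_k^2
\;\le\; C \bigl(\max_k d_k\bigr) \Bigl(\sum_k d_k\Bigr) \|A\|^2
\;=\; C \bigl(\max_k d_k\bigr)\, p\, \|A\|^2
\;=\; o(p^2 \|A\|^2),
\]
which is exactly where the hypothesis $\max_k d_k = o(p)$ enters. This is the main obstacle and the one place the block-size condition is truly used; everything else is robust.

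Combining the bounds on $D$ and $O$ yields the desired concentration, and inserting this into the Bai--Zhou / Yaskov machinery gives the Marchenko--Pastur limit. The only additional care is to perform a standard truncation of the entries of $X$ to pass from convergence in probability (in the Stieltjes transform) to the almost-sure convergence claimed in the theorem via a Borel--Cantelli argument; this truncation is orthogonal to the block structure and is essentially identical to the classical case.
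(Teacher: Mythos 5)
Your argument is correct and follows essentially the same route as the paper: reduce to the Bai--Zhou quadratic-form condition, decompose $x^\tran A x - \tr A$ into the diagonal-block contribution (bounded via $\E\|x^{(k)}\|^4 \le K d_k^2$, then $\sum_k d_k^2 \le (\max_k d_k)\,p$) and the off-diagonal-block contribution (bounded via $\|A\|_F^2 \le p\|A\|^2$ after the matched-index cancellation). The only superfluous step is the closing remark about truncation and Borel--Cantelli: Theorem~\ref{thm: BZ} already delivers almost-sure convergence, so no additional argument is needed once the variance bound is in place.
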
 

\begin{remark}
 The requirement $\max_k d_k= o(p)$ in Theorem~\ref{thm: block-independent} implies that the
  number of independent blocks grows to infinity. We will show that this condition is necessary in Section~\ref{sec:optimal}.
\end{remark}


Our second main result is the Marchenko-Pastur law for the random tensor model
(described in Definition~\ref{def: random tensor}).

\begin{theorem}[Marchenko-Pastur law for the random tensor model] 	\label{thm: random tensor}
  Let $X = X^{(p)}$, $p=\binom{n}{d}$, $n=1,2,\ldots$, be a sequence of 
  $p \times m$ random matrices, whose columns are independent and follow the random tensor model with $d = o(n^{1/3})$,
 and the aspect ratio
  $p/m$ converging to a number $\lambda \in (0, \infty)$ as $p \to \infty$.
Assume that the entries of the random vector $x$ have uniformly bounded fourth moments.\footnote{Note that for the random tensor model, 
  the fourth moment assumption only concerns the entries of the random {\em vector} $x$. 
    The fourth moments of the entries of the random {\em tensor} $\x$, 
    and thus of the entries of the random matrix $X$, can be very large. Indeed, 
    if $\E x_i^4 = K$ for all $i$, then $\E \x_\i^4 = K^d$ by independence.}
  Then with probability $1$ the empirical spectral distribution of the sample covariance matrix 
  $W = \frac{1}{m} XX^\tran$ converges weakly in distribution to the 
  Marchenko-Pastur distribution with parameter $\lambda$.  
\end{theorem}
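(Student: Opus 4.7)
The plan is to follow the standard Stieltjes transform route to Marchenko--Pastur, applied in the now-classical form of Bai--Zhou (and refined by Yaskov) for matrices with i.i.d.\ columns: provided each column is isotropic and a single quadratic-form concentration estimate
\begin{equation*}
\sup_{\|A\|\le 1}\; \E\bigl|\x^{\tran} A \x - \tr(A)\bigr|^{2} = o(p^{2})
\end{equation*}
holds along the sequence, the Stieltjes transform of $W$ converges almost surely to the fixed point defining $f_{\lambda}$. Isotropy of the tensor column $\x$ is immediate: for $d$-subsets $\i,\j\subset[n]$, independence of the entries of $x$ factors $\E\,\x_{\i}\x_{\j}=\prod_{k}\E\, x_{k}^{n_{k}}$, where $n_{k}\in\{0,1,2\}$ is the multiplicity of $k$ across $\i$ and $\j$; since $\E\, x_{k}=0$, any index with $n_{k}=1$ kills the product, forcing $\i=\j$, in which case every $n_{k}=2$ and the product equals $1$. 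Thus $\E\, \x\x^{\tran}=I_{p}$, and the whole burden of the theorem reduces to the displayed quadratic-form estimate.

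For that estimate, I would expand
\begin{equation*}
\Var(\x^{\tran} A \x) = \sum_{\i,\j,\i',\j'} A_{\i,\j}\,A_{\i',\j'}\Bigl(\E\,\x_{\i}\x_{\j}\x_{\i'}\x_{\j'}-\delta_{\i,\j}\delta_{\i',\j'}\Bigr),
\end{equation*}
and use independence once more to factor every four-point correlator into $\prod_{k}\E\, x_{k}^{n_{k}}$, now with $n_{k}\in\{0,1,2,3,4\}$. Only multi-indices with every $n_{k}\ne 1$ survive the mean-zero assumption, and the uniform fourth-moment bound (plus H\"older for the $n_{k}=3$ case) makes every surviving term bounded by an absolute constant. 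The problem thus becomes purely combinatorial: classify the quadruples $(\i,\j,\i',\j')$ of $d$-subsets of $[n]$ by the incidence pattern they induce on $[n]$, and match each class against the subtracted ``diagonal'' term $\delta_{\i,\j}\delta_{\i',\j'}$. The genuinely diagonal quadruples reproduce the variance that would arise from truly independent entries of unit variance and contribute at most $O(\|A\|_{F}^{2})\le O(p\|A\|^{2})$, well inside the target $o(p^{2}\|A\|^{2})$. Every non-diagonal surviving class must force at least one element to be shared across the two pairs $\{\i,\j\}$ and $\{\i',\j'\}$ with multiplicity $\ge 3$ (otherwise that pair already decouples by isotropy), and each such forced coincidence deflates the combinatorial count by a factor of order $d/n$ relative to the unconstrained count $\binom{n}{d}^{4}$.

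The main obstacle is precisely the bookkeeping for these non-diagonal quadruples. A careful enumeration, organized by the multiset union $S=\i\cup\j\cup\i'\cup\j'$ and its multiplicity profile, should reveal that the dominant surviving non-diagonal class is already a factor $d^{3}/n$ smaller than the diagonal baseline, so that the total non-diagonal contribution is $O\bigl((d^{3}/n)\,p^{2}\|A\|^{2}\bigr)$; the hypothesis $d=o(n^{1/3})$ then renders this $o(p^{2}\|A\|^{2})$, matching the Bai--Zhou/Yaskov requirement. A polarization step handles non-symmetric $A$, reducing to the symmetric case in the standard way. Once this tensorial Hanson--Wright-type inequality is in hand, substituting it into the Bai--Zhou/Yaskov machinery delivers the almost-sure weak convergence of $F^{W}$ to the Marchenko--Pastur distribution with parameter $\lambda$.
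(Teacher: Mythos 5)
Your high-level plan --- reduce to the Bai--Zhou criterion $\Var(\x^\tran A\x)=o(p^2)$ for $\|A\|=O(1)$, verify isotropy, then expand the variance into four-point correlators $\E\,\x_\i\x_\j\x_\k\x_\l$ classified by multiplicity pattern --- is exactly the route the paper takes. The isotropy computation is correct. However, the combinatorial core of your sketch contains two real gaps, both traceable to implicitly treating the tensor entries $\x_\i$ as if they were independent.

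First, the claim that the ``genuinely diagonal quadruples reproduce the variance that would arise from truly independent entries of unit variance and contribute at most $O(\|A\|_F^2)$'' is not correct. For $\i\neq\k$ with $\i\cap\k\neq\emptyset$, the random variables $\x_\i^2-1$ and $\x_\k^2-1$ are \emph{not} independent, and $\text{Cov}(\x_\i^2,\x_\k^2)\neq 0$; for a single common index it equals $\E x_\alpha^4-1$. So the diagonal piece $\E\big(\sum_\i A_{\i\i}(\x_\i^2-1)\big)^2$ is dominated by these off-diagonal cross-covariances, not by $\sum_\i A_{\i\i}^2$. Bounding it requires summing over the overlap $v=|\i\cap\k|$ and controlling $\binom{n}{d}\sum_{v\geq 1}\binom{d}{v}\binom{n-d}{d-v}K^v$; the paper's answer is $\lesssim \binom{n}{d}^2 Kd^2/n$, which is far larger than $p\|A\|^2$. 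Your bound would give the wrong order of magnitude even in the diagonal sector.

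Second, the claim ``every non-diagonal surviving class must force at least one element to be shared across the two pairs with multiplicity $\ge 3$, otherwise that pair already decouples by isotropy'' is false. Take $d=2$, $\i=\{1,2\}$, $\j=\{3,4\}$, $\k=\{1,3\}$, $\l=\{2,4\}$. No index of $[n]$ has multiplicity $\ge 3$, yet $\E\,\x_\i\x_\j\x_\k\x_\l=\E x_1^2x_2^2x_3^2x_4^2=1$, while $\E\,\x_\i\x_\j=\E\,\x_\k\x_\l=0$. The tensor entries are dependent through the underlying coordinates of $x$, so a quadruple can survive via a ``cross-linked'' double cover with defect $w=0$ and no repeated meta-index. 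These are precisely the hardest terms. The paper handles them with a mechanism your sketch omits: after fixing $\i$, the sum over $\j$ is controlled by Cauchy--Schwarz, $\sum_{\j\in\J(\i)}|A_{\i\j}|\le |\J(\i)|^{1/2}\|A\|$, so that the count for $\j$ enters only with exponent $1/2$; only then does the detailed enumeration of $\k,\l$ (parametrized by the defect $w$, the overlap $v=|\i\cap\j|$, and $r=|\i\cap\j\cap\k|$) close to give $\Soffdiag\lesssim \binom{n}{d}^2\,d^{3/2}/n^{1/2}$. Without the square-root gain from the spectral norm, the $w=0$ non-paired quadruples are too numerous. Until the multiplicity-$\ge 3$ claim is replaced by an argument that actually bounds the defect-zero cross-linked pairings, the ``factor of order $d/n$ per forced coincidence'' heuristic has no basis, and the asserted $O((d^3/n)p^2\|A\|^2)$ bound is unsupported.
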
 

\begin{remark}
   Better understood is the {\em non-symmetric} version of the random tensor model. 
  Instead of considering the $d$-fold product $x^{\otimes d}$ of a random vector $x \in \R^n$, 
  consider $d$ i.i.d. random vectors $x_1,\ldots,x_d \in \R^n$ and consider their inner product
  $x_1 \otimes \cdots \otimes x_d$. Vectorizing this random tensor, we obtain a 
  random vector in $\R^{n^d}$. Spectral properties of the non-symmetric random tensor model 
  were studied in \cite{Ambainis-Harrow-Hastings} in connection with physics and quantum information 
  theory. Marchenko-Pastur law was proved for this model by Lytova \cite{Lytova} 
  under the assumption\footnote{Although the main result in \cite{Lytova} is stated for fixed degree $d$, 
  it can be allowed to grow as fast as $d = o(n)$: 
  see Lemma 3.3, Theorem 1.2, Definition 1.1, and Remark 4.1 in \cite{Lytova}.}
  $d = o(n)$. The symmetric random tensor model is even more challenging as it
  is generated by fewer independent random variables. 
\end{remark}

\subsection{Marchenko-Pastur law via concentration of quadratic forms}

Our approach to both main results is based on concentration of quadratic forms. 
Starting with the original proof of Marchenko-Pastur law \cite{MP} via Stieltjes transform,
many arguments in random matrix theory (e.g. \cite{El Karoui, PajorPastur}),
make crucial use of concentration of quadratic forms. 
Specifically, at the core of the proof of Marchenko-Pastur law lies the bound 
\begin{equation}	\label{eq: conc quadratic form}
\Var(x^\tran A x) = o(p^2)
\end{equation}
where $x \in \R^p$ is any column of the random matrix $X$ and $A$ is any deterministic 
$p \times p$ matrix with $\|A\| \le 1$. 
If the entires of $x$ have uniformly bounded fourth moments, one always has
$$
\Var(x^T A x) = \E (x^T A x)^2 \le \E \|x\|_2^4 = O(p^2).
$$
Thus, the requirement \eqref{eq: conc quadratic form} is just a little stronger than the trivial bound.

Suppose the columns of the random matrix $X$ are independent, 
but the entries of each column may be dependent. 
Then for Marchenko-Pastur to hold for $X$, it is sufficient (but not necessary) to verify the 
concentration inequality \eqref{eq: conc quadratic form}. The sufficiency is given in the following result; 
the absence of necessity is noted in \cite[Section~2.1, Example~3]{Adamczak}.

\begin{theorem}[Bai-Zhou \cite{BaiZhou}]		\label{thm: BZ}
  Let $X = X^{(p)}$, $p=1,2,\ldots$, be a sequence of mean zero 
  $p \times m$ random matrices with independent columns. 
  Assume the following as $p \to \infty$. 
  \begin{enumerate}[\quad 1.]
    \item The aspect ratio $p/m$ converges to a number $\lambda \in (0, \infty)$ as $p \to \infty$.
    \item For each $p$, all columns $X_k$ of $X^{(p)}$ have the same covariance matrix
    	$\Sigma=\Sigma^{(p)} = \E X_k X_k^\tran$.
       The spectral norm of the covariance matrix $\Sigma^{(p)}$ 
    	is uniformly bounded, and the empirical spectral distribution of $\Sigma^{(p)}$ converges
  	to a deterministic distribution $H$.	
    \item \label{item: BZ quadratic form}
      For any deterministic $p \times p$ matrices $A = A^{(p)}$ with uniformly bounded spectral norm
      and for every column $X_k$, we have 
      $$
      \max_k \Var(X_k^\tran A X_k) = o(p^2).
      $$
  \end{enumerate}
  Then, with probability $1$ the empirical spectral distribution of the sample covariance matrix 
  $W = \frac{1}{m} XX^\tran$ converges weakly to a deterministic distribution
  whose Stieltjes transform satisfies
  \begin{equation}	\label{eq: MP non-isotropic}
  s(z) = \int_0^\infty \frac{1}{t(1-\lambda-\lambda zs) -z} \; dH(t), \quad z \in \mathbb{C}^+.
  \end{equation} 
\end{theorem}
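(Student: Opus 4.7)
The plan is to work entirely with the Stieltjes transform $s_p(z) = \frac{1}{p}\tr(W - zI)^{-1}$ for $z = u + \imag\eta \in \mathbb{C}^+$. By the Stieltjes continuity theorem, the claim reduces to showing that $s_p(z) \to s(z)$ almost surely for each $z \in \mathbb{C}^+$, where $s(z)$ is the unique Stieltjes transform of a sub-probability measure on $[0,\infty)$ solving \eqref{eq: MP non-isotropic}. I would split this into a concentration step $s_p(z) - \E s_p(z) \to 0$ a.s. and an identification step $\E s_p(z) \to s(z)$.

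For concentration, I would use the independence of the columns $X_1,\ldots,X_m$ (assumption of the theorem) to decompose $s_p(z) - \E s_p(z)$ as a telescoping sum of martingale differences with respect to the filtration $\mathcal{F}_k = \sigma(X_1,\ldots,X_k)$. Each increment is the trace of the difference of resolvents of $W$ under a rank-one column replacement. The standard rank-one perturbation identity bounds each such difference by $2/(p\eta)$. Azuma--Hoeffding then gives exponential tail bounds, and Borel--Cantelli (applied on a countable dense set of $\mathbb{C}^+$, extended by Vitali/normal families) yields the almost sure statement.

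For the equation, I would apply the Sherman--Morrison formula. Writing $W_{(k)} = W - \frac{1}{m} X_k X_k^\tran$ and using the identity $W(W-zI)^{-1} = I + z(W-zI)^{-1}$ together with the rank-one expansion gives
\begin{equation*}
p + z\,\tr(W-zI)^{-1} = \sum_{k=1}^m \frac{\beta_k(z)}{1 + \beta_k(z)}, \qquad \beta_k(z) = \frac{1}{m} X_k^\tran (W_{(k)} - zI)^{-1} X_k.
\end{equation*}
Because $X_k$ is independent of $W_{(k)}$, assumption \ref{item: BZ quadratic form} applies conditionally to the deterministic (given $W_{(k)}$) matrix $A = (W_{(k)} - zI)^{-1}$, whose spectral norm is at most $1/\eta$. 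This yields $\Var(X_k^\tran A X_k \mid W_{(k)}) = o(p^2)$, so after the $1/m$ normalization the fluctuation of $\beta_k$ is $o(1)$. Its conditional mean is $\frac{1}{m}\tr(\Sigma (W_{(k)} - zI)^{-1})$, and a rank-one replacement swaps $W_{(k)}$ for $W$ at $O(1/p)$ cost. Thus each $\beta_k$ is well approximated by the common deterministic quantity $g_p(z) := \frac{1}{m}\E\tr(\Sigma(W-zI)^{-1})$. Dividing the identity above by $p$ gives $1 + z\E s_p(z) = \frac{1}{\lambda}\cdot \frac{g_p(z)}{1+g_p(z)} + o(1)$. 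Using the spectral decomposition of $\Sigma$ together with assumption 2 (so that $F^{\Sigma} \Rightarrow H$), the quantity $g_p(z)$ converges to an integral against $H$ of a simple rational function of $s(z)$; algebraic rearrangement produces exactly \eqref{eq: MP non-isotropic}. The argument closes by invoking the standard Silverstein uniqueness lemma: \eqref{eq: MP non-isotropic} has a unique solution in the class of Stieltjes transforms of sub-probability measures on $[0,\infty)$.

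The main obstacle is the quadratic-form replacement: assumption 3 gives exactly the borderline $o(p^2)$ variance needed so that, after dividing by $m^2 \asymp p^2$, each term contributes $o(1)$. Care is required because three distinct approximations (conditional variance control of $\beta_k$, swapping $W_{(k)}$ for $W$, and converting the trace against $\Sigma$ into an integral against $H$) must each be shown to be $o(1)$ uniformly in $k$, and it is their compound effect that delivers the limiting equation. Everything else -- Sherman--Morrison, the column martingale, the companion/Silverstein algebra -- is structural and independent of the specific model for $X_k$, which is what makes this theorem such a convenient sufficient condition for the two applications of interest in the paper.
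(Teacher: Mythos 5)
The paper does not prove Theorem~\ref{thm: BZ}; it is quoted verbatim as a known result due to Bai and Zhou \cite{BaiZhou} and used as a black box (the only role the paper plays is to verify hypothesis~\ref{item: BZ quadratic form} for the two models it introduces). There is therefore no internal proof of this statement for your attempt to be compared against. That said, your sketch is a faithful high-level reconstruction of the actual Bai--Zhou argument, which is indeed a Stieltjes-transform proof combining a column-martingale concentration step (Azuma plus rank-one resolvent perturbation giving increments of order $1/(p\eta)$) with a Sherman--Morrison identification step built on the identity $p+z\tr(W-zI)^{-1}=\sum_{k}\beta_k/(1+\beta_k)$ and the self-consistent equation for the companion transform. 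Two places where your outline is looser than the real proof and would need care if you fleshed it out: (i) assumption~\ref{item: BZ quadratic form} is stated for \emph{deterministic} sequences $A^{(p)}$ of bounded norm, so invoking it conditionally with the random matrix $A=(W_{(k)}-zI)^{-1}$ requires showing that the $o(p^2)$ rate is uniform over matrices with $\|A\|\le 1/\eta$ (Bai--Zhou handle this via a diagonalization/contradiction argument); and (ii) the passage from $g_p(z)=\frac{1}{m}\E\tr\bigl(\Sigma(W-zI)^{-1}\bigr)$ to an integral against $H$ is not a single substitution but requires a second self-consistent relation linking $g_p$ and $s_p$ together with tightness and the Silverstein uniqueness lemma you mention. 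With those two points tightened, your route is essentially the one in \cite{BaiZhou}.
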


In the case where the entries of the columns are uncorrelated and have unit variance, 
we have $\Sigma= I$ and Theorem~\ref{thm: BZ} yields that the limiting distribution 
is the original Marchenko-Pastur law \eqref{eq: MP}.

\subsection{Concentration of quadratic forms: new results}

Theorem~\ref{thm: BZ} reduces proving Marchenko-Pastur law for our new models
to the concentration of a quadratic form $x^\tran A x$. If the random vector $x$
has all independent entries, bounding the variance of this quadratic form is elementary. 
Moreover, in this case Hanson-Wright inequality (see e.g. \cite{RV Hanson-Wright, V book}) 
gives good probability tail bounds for the quadratic form. 

However in our new models, the coordinates of the random vector are not independent.
There seem to be no sufficiently powerful concentration inequalities available for such models. 
Known concentration inequalities for random chaoses \cite{A, AL, AW, GSS, Latala, LL, Lehec} exhibit an 
unspecified (possibly exponential) dependence on the degree $d$, which is too bad 
for our purposes. 
An exception is the recent work \cite{V tensors} on concentration of random tensors
with an optimal dependence on $d$. However, the results of \cite{V tensors} only apply for non-symmetric tensors and positive-semidefinite matrices $A$. 
 
The following are new concentration inequalities for the block-independent model (Theorem~\ref{thm: variance block-independent}) and the random tensor model (Theorem~\ref{thm: variance random tensor}), 
which we will prove in Section~\ref{s: variance block-independent} and Section~\ref{s: variance random tensor} respectively.

\begin{theorem}[Variance of quadratic forms for block-independent model] \label{thm: variance block-independent} 
Let $x \in \mathbb R^p$ be a random vector that follows the block-independent model
  with blocks of sizes $d_k$.
  Then, for any fixed matrix $A \in \mathbb R^{p \times p}$, we have
  $$
  \Var(x^\tran A x) \le  \|A\|^2 \Big( K \sum_k d_k^2 + 2p \Big).
  $$
  Here $K$ is the largest fourth moment of the entries of $x$.
\end{theorem}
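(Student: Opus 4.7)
The plan is to partition $x$ into its independent blocks and $A$ into corresponding sub-blocks, expand the quadratic form, and use independence together with the mean-zero and isotropy assumptions to kill most of the cross terms. Write $x = (x^{(1)}, \ldots, x^{(B)})$ with $x^{(b)} \in \mathbb{R}^{d_b}$, and for each pair $(j,k)$ let $A_{jk}$ denote the submatrix of $A$ with rows in block $j$ and columns in block $k$. Then
$$
\mathbb{E}(x^\tran A x)^2 = \sum_{i_1,i_2,i_3,i_4} A_{i_1 i_2} A_{i_3 i_4}\, \mathbb{E}[x_{i_1} x_{i_2} x_{i_3} x_{i_4}],
$$
and by independence between blocks together with $\mathbb{E} x_i = 0$, the expectation vanishes unless every block containing at least one of $i_1,\ldots,i_4$ contains at least two of them. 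This leaves only two combinatorial patterns: (a) all four indices lie in one common block, and (b) the four indices split into two pairs, each pair lying in a distinct block. Pattern (b) further splits into three sub-patterns according to which two of the four indices are paired together: $(i_1i_2)(i_3i_4)$, $(i_1i_3)(i_2i_4)$, and $(i_1i_4)(i_2i_3)$.

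Next I would evaluate or bound each piece. For pattern (a), the contribution is $\sum_b \mathbb{E}[(x^{(b)\tran} A_{bb} x^{(b)})^2]$, which I would bound by $\|A_{bb}\|^2 \,\mathbb{E}\|x^{(b)}\|^4$ using $|y^\tran B y| \le \|B\|\|y\|^2$, and then use $\mathbb{E}\|x^{(b)}\|^4 \le K d_b^2$ (Cauchy--Schwarz on $\sum_{i\in b} x_i^2$) and $\|A_{bb}\| \le \|A\|$ to get the block sum $K \|A\|^2 \sum_b d_b^2$. Among the three sub-patterns of (b), the pairing $(i_1 i_2)(i_3 i_4)$ combined with isotropy gives exactly $\sum_{b_1 \ne b_2} \tr(A_{b_1 b_1}) \tr(A_{b_2 b_2})$, which matches the off-diagonal part of $(\tr A)^2 = (\mathbb{E}\, x^\tran A x)^2$; the diagonal part $\sum_b (\tr A_{bb})^2$ of $(\tr A)^2$ sits inside the (a) term and converts $\mathbb{E}[(x^{(b)\tran} A_{bb} x^{(b)})^2]$ into its variance, which still obeys the same upper bound.

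The two remaining sub-patterns contribute $\sum_{b_1 \ne b_2} \|A_{b_1 b_2}\|_F^2$ and $\sum_{b_1 \ne b_2} \tr(A_{b_1 b_2} A_{b_2 b_1})$ respectively; each is bounded by $\|A\|_F^2 \le p\|A\|^2$, the latter by Cauchy--Schwarz in the block indices $\sum_{b_1,b_2}\|A_{b_1 b_2}\|_F \|A_{b_2 b_1}\|_F \le \|A\|_F^2$. Adding the contributions gives
$$
\Var(x^\tran A x) \le K\|A\|^2 \sum_b d_b^2 + 2p\|A\|^2,
$$
which is exactly the stated inequality. The main obstacle is really just bookkeeping: correctly enumerating the block patterns that survive the expectation and verifying that subtracting $(\tr A)^2$ cancels precisely the isotropy-generated cross term in sub-pattern $(i_1i_2)(i_3i_4)$ while leaving the other two sub-patterns to be absorbed into the $2p\|A\|^2$ term. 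No probabilistic tool beyond the fourth-moment assumption and isotropy is needed.
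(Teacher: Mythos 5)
Your argument is correct and follows essentially the same route as the paper's: classify the surviving index quadruples by block pattern (all four in one block vs.\ two pairs in two distinct blocks), bound the one-block piece by $K\|A\|^2 \sum_b d_b^2$ via the fourth-moment and operator-norm estimates, and bound each two-pair sub-pattern by $\|A\|_F^2 \le p\|A\|^2$. The only cosmetic difference is that the paper first symmetrizes $A$ and splits it into its block-diagonal part $D$ and off-diagonal part $A-D$, verifying that the cross-covariance vanishes so the variance decomposes additively, whereas you expand $\E(x^\tran A x)^2$ in a single pass and handle the possibly non-symmetric case directly by Cauchy--Schwarz over block Frobenius norms.
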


This result combined with Theorem~\ref{thm: BZ} 
immediately establishes Marchenko-Pastur law for the block-independence model:

\begin{proof}[Proof of Theorem~\ref{thm: block-independent}]
Apply Theorem~\ref{thm: variance block-independent} and simplify 
the conclusion using the bound 
$\sum_k d_k^2 \le \left( \max_k d_k \right) \sum_k d_k = \left( \max_k d_k \right) p$. We get
$$
\Var(x^\tran A x) 
\le  p\|A\|^2 \left( K\max_k d_k  + 2 \right)  = o(p^2),
$$
if $\|A\| = O(1)$, $K = O(1)$, and $\max_k d_k=o(p)$ as $p \to \infty$. 
This justifies condition \ref{item: BZ quadratic form} of Theorem~\ref{thm: BZ}.
Applying this theorem with $\Sigma = I$ we conclude Theorem~\ref{thm: block-independent}. 
\end{proof}

\begin{theorem}[Variance of quadratic forms for random tensor model]	\label{thm: variance random tensor} 
  There exist positive absolute constants $C,c>0$ such that the following holds. 
  Let $\x \in \mathbb R^p$, $p = \binom{n}{d}$, be a random vector that follows the random tensor model.
  Then, for any fixed matrix $A \in \mathbb R^{p \times p}$, we have
  $$
  \Var(\x^\tran A \x) 
  \le C \|A\|^2 p^2 \bigg( \frac{K^{1/2} d}{n^{1/3}} \bigg)^{3/2},
  $$
  if $K^{1/2} d / n^{1/3} < c$. 
  Here $K$ is the largest fourth moment of the entries of $x$.
\end{theorem}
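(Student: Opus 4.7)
The plan is to expand $\Var(\x^\tran A \x)$ as a 4-fold sum over tuples $(\i,\j,\k,\l)$ of $d$-subsets of $[n]$ and organize the contributions by how the four sets overlap. Using independence of the scalar coordinates $x_a$ and the standardization $\E x_a = 0$, $\E x_a^2 = 1$, one first verifies
\[
\E \x_\i \x_\j = \delta_{\i,\j}, \qquad \E \x_\i \x_\j \x_\k \x_\l = \prod_{a \in [n]} \E x_a^{m_a(\i,\j,\k,\l)},
\]
where $m_a(\i,\j,\k,\l) := \#\{r \in \{\i,\j,\k,\l\}: a \in r\}$. Consequently,
\[
\Var(\x^\tran A\x) = \sum_{\i,\j,\k,\l} A_{\i\j} A_{\k\l} \Big( \prod_a \E x_a^{m_a} - \delta_{\i,\j}\delta_{\k,\l} \Big),
\]
and only \emph{admissible} tuples, i.e.\ those with $m_a \neq 1$ for every $a$, contribute (a single appearance of $x_a$ zeroes the product via $\E x_a=0$).

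I would next partition the admissible tuples by their multiplicity type $(n_2,n_3,n_4)$ with $n_s := \#\{a: m_a = s\}$ and $2n_2+3n_3+4n_4=4d$. H\"older's inequality gives the moment bound $|\prod_a \E x_a^{m_a}| \le K^{(3n_3+4n_4)/4}$, using $\E x_a^2 = 1$, $|\E x_a^3|\le K^{3/4}$, $|\E x_a^4|\le K$. The special type $(2d,0,0)$ together with the structural condition $\i=\j$, $\k=\l$, $\i\cap\k=\emptyset$ gives $\prod_a \E x_a^{m_a} = 1 = \delta_{\i,\j}\delta_{\k,\l}$, cancelling exactly; for all other admissible tuples the $\delta$-term vanishes. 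A short combinatorial enumeration (e.g.\ the three-parameter family $(u,s,t)$ with $u+s+t=d$ describing the possible pairwise overlap sizes in the $(2d,0,0)$ type) lists all \emph{overlap patterns} within each type.

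For each overlap pattern I would bound the restricted sum $\sum A_{\i\j}A_{\k\l}$ by recognizing it as a tensor contraction of $A\otimes A$ against indicator-style kernels, and then applying the operator-norm inequality $|\langle u,Av\rangle|\le\|A\|\,\|u\|_2\|v\|_2$ to carefully chosen vectors built from the pattern. This produces a bound $\|A\|^2$ times a combinatorial factor combining (i) the tuple count $\sim n^N/N!$, where $N = n_2+n_3+n_4 \le 2d$ is the number of distinct indices used, which contributes a cost of order $(d/n)^{2d-N}$ relative to $p^2 \asymp n^{2d}/(d!)^2$, and (ii) the moment weight $K^{(3n_3+4n_4)/4}$. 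Summing across patterns and types, the smallness assumption $K^{1/2}d/n^{1/3}<c$ controls the tail as a convergent geometric-type series, with the dominant non-cancelling contribution working out to $C\|A\|^2 p^2 (K^{1/2}d/n^{1/3})^{3/2}$.

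The main technical obstacle is the bilinear bounding of the per-pattern sums. A naive estimate $|\sum A_{\i\j}A_{\k\l}| \le \|A\|^2 \cdot \#\{\text{tuples of the pattern}\}$ is exponentially too weak in $d$, since the multinomial prefactors counting overlap patterns can reach order $36^d$ (e.g.\ in the perfect-pairing type) and dwarf any $n$-suppression. Exploiting the \emph{operator}-norm structure of $A$ to collapse these sums into low-rank tensor contractions whose constituent vectors have moderate $\ell^2$ norms is essential; the sharp exponent $3/2$ in $(K^{1/2}d/n^{1/3})^{3/2}$ emerges precisely from executing this operator-theoretic reduction uniformly across all types and overlap patterns.
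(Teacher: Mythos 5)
Your overall framework --- expand the fourth moment, classify by the multiplicity profile $(n_2,n_3,n_4)$, bound the scalar moment by a power of $K$, then count tuples and sum a geometric-type series --- is the right skeleton, and your moment estimate $K^{(3n_3+4n_4)/4}$ is in fact a hair sharper than the paper's $K^{3w/2}$ (the two are linked by $2w = n_3 + 2n_4$). But you have correctly flagged, and then not resolved, the step on which the whole theorem hinges.

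The issue is with the proposed bilinear bound. You want to view the per-pattern sum $\sum_{(\i,\j,\k,\l)\in P} A_{\i\j}A_{\k\l}$ as $\langle u,\, (A\otimes A)\, v\rangle$ with indicator-style $u,v$ and invoke $\|A\otimes A\|=\|A\|^2$. For that to give $\|A\|^2\|u\|_2\|v\|_2$ you would need the pattern constraint $P$ --- which prescribes all the pairwise and triple intersection sizes among $\i,\j,\k,\l$ --- to factor as a product of a condition on $(\i,\k)$ (the ``row'' index of $A\otimes A$) and a condition on $(\j,\l)$ (the ``column'' index). It does not: the defining constraints, e.g. $|\i\cap\j|=v$ and $|\i\cup\j\cup\k\cup\l|=2d-w$, couple row and column slots. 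So the one-shot tensor contraction you envision is not available, and you have not proposed a decoupling device to make it so. The paper sidesteps this entirely with something much more elementary and asymmetric: it orders the sums $\sum_\i\sum_\j\sum_\k\sum_\l$, bounds $|A_{\k\l}|\le\|A\|=1$ \emph{pointwise}, and applies Cauchy--Schwarz to the single inner sum over $\j$, giving $\sum_{\j\in\J(\i)}|A_{\i\j}|\le |\J(\i)|^{1/2}\|A\|$. This one square root on $|\J(\i)|=\binom{d}{v}\binom{n-d}{d-v}$ is exactly the saving over the naive count you worried about, and it is what produces the exponent $3/2$ after the binomial bookkeeping (the paper's factors $B_1,\dots,B_6$, combined with Lemmas on decay and log-concavity of binomial coefficients).

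A second, smaller divergence: the paper splits the variance into a diagonal piece $\sum_\i A_{\i\i}(\x_\i^2-1)$ and an off-diagonal piece $\sum_{\i\ne\j}A_{\i\j}\x_\i\x_\j$ at the outset and bounds them separately; the diagonal piece is handled by a clean Vandermonde identity plus a stability estimate for binomial coefficients and yields $\lesssim p^2\cdot Kd^2/n$, while the off-diagonal piece yields $\lesssim p^2\cdot d^{3/2}/n^{1/2}$. Your proposal fuses these and would have to extract the exact cancellation of the $\delta_{\i,\j}\delta_{\k,\l}$ term inside the one big sum, which is doable but muddier. In short, your plan is structurally aligned with the paper's proof and your diagnosis of the obstruction is accurate, but the specific operator-theoretic reduction you sketch does not go through as stated; the correct (and simpler) device is a single Cauchy--Schwarz on one inner sum, not a global $A\otimes A$ contraction.
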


This result combined with Theorem~\ref{thm: BZ} 
immediately establishes Marchenko-Pastur law for the random tensor model:

\begin{proof}[Proof of Theorem~\ref{thm: random tensor}]
Theorem~\ref{thm: variance random tensor} yields
$$
\Var(x^\tran A x) = o(p^2)
$$
whenever $\|A\| = O(1)$, $K = O(1)$, and $d = o(n^{1/3})$. 
This justifies condition \ref{item: BZ quadratic form} of Theorem~\ref{thm: BZ}.
Applying this theorem with $\Sigma = I$ we conclude Theorem~\ref{thm: random tensor}. 
\end{proof}

\subsection{Anisotropic block-independent model}		\label{s: correlations}

In Definition~\ref{def: block-independent} of the block-independent model we assumed for simplicity 
that the blocks are isotropic. Let us show how to remove this assumption
and still obtain a version of Theorem~\ref{thm: block-independent};
the limiting spectral distribution will then be the anisotropic Marchenko-Pastur law \eqref{eq: MP non-isotropic}. 

To see this, suppose all columns of our random matrix $X = X^{(p)}$ have the same covariance matrix 
$\Sigma = \Sigma^{(p)}$. Assume that, as $p \to \infty$, we have $\|\Sigma^{(p)}\| = O(1)$ 
and the empirical spectral distribution\footnote{Since the blocks are independent, the covariance matrix $\Sigma$ is block-diagonal. If $\Sigma_j$ is the covariance matrix of the block $j$, then the spectral norm of $\Sigma$ is the
maximal spectral norm of $\Sigma_j$, and the empirical spectral distribution of $\Sigma$ is the mixture of the empirical spectral distributions of all $\Sigma_j$.}
of $\Sigma^{(p)}$ converges to a deterministic distribution $H$.

Denoting as before by $X_k$ the $k$-th column of $X$, we can represent it as $X_k = \Sigma^{1/2} x_k$ 
where $x_k$ is some isotropic random vector, i.e. one whose entries are uncorrelated and have unit variance. Then 
$$
\Var(X_k^\tran A X_k) = \Var \big( x_k^\tran \; \Sigma^{1/2} A \Sigma^{1/2} \; x_k \big).
$$
Applying Theorem~\ref{thm: variance block-independent} for $x=x_k$ and $\Sigma^{1/2} A \Sigma^{1/2}$ instead of $A$, we conclude that 
$
\Var(X_k^\tran A X_k) = o(p^2)
$
if $\|\Sigma\| = O(1)$, $\|A\| = O(1)$, $K = O(1)$, and $\max_k d_k = o(p)$. 

This justifies condition \ref{item: BZ quadratic form} of Theorem~\ref{thm: BZ}.
Applying this theorem, we conclude that the limiting spectral distribution of $W = \frac{1}{m} XX^\tran$
converges to the anisotropic Marchenko-Pastur distribution \eqref{eq: MP non-isotropic}.

\subsection{Optimality}\label{sec:optimal}

Here we show that the number of blocks in the block-independent model has to go to infinity.
Indeed, let $X^{(p)}$ be a sequence of $p \times m$ random matrices such that
$p/m \to \lambda > 0$ as $p \to \infty$, and whose columns are independent copies of an isotropic 
random vector $x^{(p)} \in \R^p$. According to a result of P.~Yaskov \cite[Theorem~2.1]{Yaskov},
a necessary condition for Marchenko-Pastur law is that 
\begin{equation}	\label{eq: Yaskov}
\frac{1}{p} \, \|x^{(p)}\|_2^2 \to 1 
\quad \text{in probability}.
\end{equation}
This condition may fail if the number of independent blocks is $O(1)$. To see this, take a random vector 
from the block-independent model with $n$ equal length blocks ($p=nd$), and replace each block with a zero vector independently 
with probability $1/2$. Multiply the result by $\sqrt{2}$. 
The resulting random vector $x^{(p)}$ still follows the bock-independent model, but 
it equals zero with probability $2^{-n}$, 
a quantity that is bounded below by a positive constant if $n = O(1)$. 
This violates the condition \eqref{eq: Yaskov}
and demonstrates that Marchenko-Pastur law fails in this case.

It is less clear whether our requirement on the degree $d = o(n^{1/3})$ 
in Theorem~\ref{thm: random tensor} is optimal. 
In the light of \eqref{eq: Yaskov}, it seems that the optimal condition might be 
$$
d = o(n^{1/2}).
$$
Indeed, consider a random vector $x^{(p)} \in \R^p$, $p=\binom{n}{d}$
obtained from a random vector $x \in \R^n$ with i.i.d. coordinates. 
that follows the random tensor model. Then 
$$
U_p := \frac{1}{p} \, \|\x^{(p)}\|_2^2
= \frac{1}{\binom{n}{d}} \sum_{1 \le i_1 < \cdots < i_d \le n} x_{i_1}^2 x_{i_2}^2 \cdots x_{i_d}^2 
$$
is a U-statistic. According to a result of W.~Hoeffding \cite{Hoeffding},
$$
\Var(U_p) \ge \frac{d^2}{n} \Var(x_1^2).
$$
Assume the variance of $x_1^2$ is nonzero. 
If $d \gtrsim n^{1/2}$ then $\Var(U_p)$ does not converge to zero. 
This makes it plausible that the necessary condition \eqref{eq: Yaskov} for Marchenko-Pastur law 
may be violated in this regime.

\section{Quadratic forms in block-independent random vectors: 
Proof of Theorem~\ref{thm: variance block-independent}}		\label{s: variance block-independent}

\subsection{Reductions}
Rearranging the entries of $x$, we can assume that the indices of the blocks are successive intervals,
i.e. the $k$th block index set is $I_k = \left\{ \sum _{l=1}^{k-1} d_l+1, \ldots,  \sum _{l=1}^{k} d_l  \right\}$. 
Since $
x^\tran A x 
= \left( x^\tran A x \right)^\tran 
= x^\tran A^\tran x
$,
the symmetric matrix $\tilde{A} := (A+A^\tran)/2$ satisfies
$$
x^\tran A x = x^\tran \tilde{A} x
\quad \text{and} \quad
\|\tilde{A}\| \le \frac{1}{2} \left( \|A\| + \|A^\tran\| \right) = \|A\|.
$$
Therefore, it suffices to prove Theorem~\ref{thm: variance block-independent} for symmetric matrices $A$.

We will control the contribution of the diagonal and off-diagonal blocks of $A$ separately. 
The diagonal blocks of $A$ form the block-diagonal matrix $D = (D_{ij})_{i,j=1}^{p}$ defined as
$$
D_{ij} = A_{ij}
\quad \text{if $i,j$ lie in the same block}
$$
and $D_{ij}=0$ otherwise.
Now, decomposing $x^\tran A x = x^\tran D x + x^\tran (A-D) x$, we have
\begin{equation}	\label{eq: var A decomposition}
\Var(x^\tran A x) 
= \Var(x^\tran D x) + \Var(x^\tran (A-D) x),
\end{equation}
since $\E x^\tran (A-D) x =0$ and hence
\[
\text{cov}(x^\tran \!D x,x^\tran \!(A-D) x)
\!=\!\E(x^\tran \!D x)(x^\tran \!(A-D) x) 
\]
\[
\!=\!\!\sum_{(i,j) \text{in the same block}}~\sum_{(k,l) \text{in different blocks}}\!\!A_{ij}A_{kl}\E x_ix_jx_kx_l\!=\!0,
\]
where either $k$ does not lie in the block containing $i,j$ (hence $x_k$ is independent of $(x_i,x_j,x_l)$ and $\E x_ix_jx_kx_l=\E x_ix_jx_l \E x_k =0)$ or $l$ does not lie in the block containing $i,j$ and $\E x_ix_jx_kx_l=0$ for the same reason. Let us bound each of the two terms on the right hand side of \eqref{eq: var A decomposition}.

\subsection{Diagonal contribution}\label{sec:diagonal}

The vector $x$ can be decomposed into blocks $\bar{x}_k := (x_i)_{i \in I_k}$, 
and the matrix $D$ consists of corresponding diagonal blocks $\bar{D}_k := (D_{ij})_{i,j \in I_k}$. Then 
$x^\tran D x = \sum_{k} \bar{x}_k^\tran \bar{D}_k \bar{x}_k$, and since 
$\bar{x}_k$ are independent, this yields
$$
\Var(x^\tran D x) = \sum_{k} \Var \left(\bar{x}_k^\tran \bar{D}_k \bar{x}_k \right).
$$
Now, 
$$
\Var \left(\bar{x}_k^\tran \bar{D}_k \bar{x}_k \right)
\le \E \left(\bar{x}_k^\tran \bar{D}_k \bar{x}_k \right)^2
\le \E \left( \|\bar{D}_k\| \, \|\bar{x}_k\|_2^2 \right)^2
\le \|A\|^2 \, \E \|\bar{x}_k\|_2^4.
$$
Furthermore, 
$$
\E \|\bar{x}_k\|_2^4
= \sum_{i,j \in I_k} \E x_i^2 x_j^2 \le K d_k^2.
$$
We conclude that
\begin{equation}	\label{eq: var D}
\Var(x^\tran D x) \le K\|A\|^2  \sum_k d_k^2.
\end{equation}

\subsection{Off-diagonal contribution}

By definition, 
\begin{equation}	\label{eq: var main}
\Var(x^\tran (A-D) x)
= \E \left( x^\tran (A-D) x \right)^2 - \left( \E x^\tran (A-D) x \right)^2.
\end{equation}
Denote by $\RR$ the set of all index pairs $(i,j)$ such that $i$ and $j$ do not lie in the same block. 
Then 
$$
\E \left( x^\tran (A-D) x \right)^2
= \E \Big( \sum_{(i,j) \in \RR} A_{ij} x_i x_j \Big)^2
= \sum_{(i,j), (k,l) \in \RR} A_{ij} A_{kl} \E x_i x_j x_k x_l
$$
Consider any term $\E x_i x_j x_k x_l$ that is nonzero. 
By the mean zero assumption and block-independence, 
none of the indices $i$, $j$, $k$ or $l$ may lie in their own block. 
This means that a pair of these indices lies in one block and another pair lies in a different block.
By definition of $\RR$, there there are only two ways to form such pairs: 
$(i,k)$ in one block and $(j,l)$ in another, or $(i,l)$ in one block and $(j,k)$ in another.

In the first scenario, block-independence yields
$$
\E x_i x_j x_k x_l = \E x_i x_k \E x_j x_l.
$$
By isotropy, this term equals $1$ if $i=k$ and $j=l$, and zero otherwise.
In the second scenario, arguing similarly we get one if $i=l$ and $j=k$, and zero otherwise.
Therefore, breaking the sum according to the scenario and then using the symmetry of $A$, we obtain 
\begin{align*} 
\sum_{(i,j), (k,l) \in \RR} A_{ij} A_{kl} \E x_i x_j x_k x_l
  &= \sum_{(i,j) \in \RR} A_{ij} A_{ij} + \sum_{(i,j) \in \RR} A_{ij} A_{ji} 
  = 2 \sum_{(i,j) \in \RR} A_{ij}^2  \\
  &\le 2 \sum_{i,j} A_{ij}^2 \le  2\sum_{i=1}^p\sum_{j=1}^{p} A_{ij}^2 
  \le 2p \|A\|^2. 
\end{align*}

We just bounded the first term in the right hand side of \eqref{eq: var main}. 
The second term vanishes. Indeed, 
$$
\E x^\tran (A-D) x 
= \sum_{(i,j) \in \RR} A_{ij} \E x_i x_j = 0
$$
since $\E x_i x_j =0$ for all $i \ne j$ by assumption.
Summarizing, we bounded the off-diagonal contribution as follows: 
$$
\Var(x^\tran (A-D) x) \le 2p \|A\|^2.
$$

Combining this with the bound \eqref{eq: var D} on the diagonal contribution 
and substituting into \eqref{eq: var A decomposition}, we conclude that 
$$
\Var(x^\tran A x) 
\le  \|A\|^2 \Big( K\sum_k d_k^2+2p \Big).
$$
\qed


\section{Quadratic forms in random tensors: Proof of Theorem~\ref{thm: variance random tensor}} \label{s: variance random tensor}

\subsection{Reductions}

Without loss of generality, we may assume that $\|A\|=1$ by rescaling.
Expanding $\x^\tran A \x$ as a double sum of terms $A_{\i \j}\x_\i \x_\j$, 
and distinguishing the diagonal terms ($\i=\j$) and the off-diagonal terms ($\i \neq \j$), 
we have: 
\begin{align}	\label{***}
\Var(\x^\tran A \x) 
  = \E \bigg[ |\x^\tran A \x - \tr A |^2 \bigg]
  &\leq  2\E \bigg[ \Big( \sum_\i A_{\i\i} (\x_\i^2 - 1) \Big)^2 \bigg] 
  	+ 2\E \bigg[ \Big( \sum_{\i \neq \j} A_{\i\j} \x_\i \x_\j \Big) ^2 \bigg]   \nonumber\\
  &=: 2\Sdiag + 2\Soffdiag.
\end{align}
Here we used the inequality $(a+b)^2 \leq 2a^2 + 2b^2$.

\subsection{Diagonal contribution}			\label{s: diagonal tensor}
Expanding the square, we can express the diagonal contribution as 
\begin{equation}	\label{eq: diagonal tensor}
\Sdiag = \sum_{\i,\k} A_{\i\i} A_{\k\k} \E (\x_\i^2-1)(\x_\k^2-1).
\end{equation}
Both meta-indices $\i$ and $\k$ range in all $\binom{n}{d}$ subsets of $[n]$ of cardinality $d$.
Let $v$ denote the overlap between these two subsets, i.e.
$$
v := |\i \cap \k|.
$$

If $v=0$, the subsets are disjoint, the random variables $\x_\i^2-1$ and $\x_\k^2-1$ are independent
and have mean zero, and thus
$$
\E (\x_\i^2-1)(\x_\k^2-1) = 0.
$$
Such terms do not contribute anything to the sum in \eqref{eq: diagonal tensor}.

If $v \ge 1$, the monomial $\x_\i^2 \x_\k^2$ consists of $v$ terms raised to the fourth power
(coming from the indices that are both in $\i$ and $\k$) 
and $2(d-v)$ terms raised to the second power (coming from the symmetric difference of $\i$ and $\k$). 
Thus,
$$
\big| \E (\x_\i^2-1)(\x_\k^2-1) \big|
\le \E \x_\i^2 \x_\k^2
\le \max_\alpha \big( \E x_\alpha^4 \big)^v \cdot \max_\beta \big( \E x_\beta^2 \big)^{2(d-v)}
\le K^v,
$$
where we used the unit variance assumption.

There are $\binom{n}{d}$ ways to choose $\i$. 
Once we fix $\i$ and $v \in \{1,\ldots,d\}$, 
there are $\binom{d}{v} \binom{n-d}{d-v}$ ways to choose $\k$,
since $v$ indices must come from $\i$ and the remaining $d-v$ indices must come from $[n] \setminus \i$.  
Therefore, 
\begin{equation}	\label{eq: diagonal tensor sum}
\Sdiag \le \binom{n}{d} \sum_{v=1}^{d} \binom{d}{v} \binom{n-d}{d-v} K^v.
\end{equation}

To bound this sum, we can assume without loss of generality that $K$ is a positive integer. Then 
the following elementary inequality holds: 
$$
\binom{d}{v} K^v \le \binom{Kd}{v},
$$
and it can be quickly checked by writing the binomial coefficients in terms of factorials.
Now, if we were summing $v$ from zero as opposed from $1$ in \eqref{eq: diagonal tensor sum}, 
we can use Vandermonde's identity and get
$$
\sum_{v=0}^{d} \binom{d}{v} \binom{n-d}{d-v} K^v
\le \sum_{v=0}^{d} \binom{Kd}{v} \binom{n-d}{d-v}
= \binom{n-d+Kd}{d}.
$$
Subtracting the zeroth term, we obtain 
$$
\sum_{v=1}^{d} \binom{d}{v} \binom{n-d}{d-v} K^v
\le \binom{n-d+Kd}{d} - \binom{n-d}{d}.
$$
Now use a stability property of binomial coefficients (Lemma~\ref{lem: stability}), which tells us that
$$
\binom{n-d+Kd}{d} - \binom{n-d}{d} 
\le \delta \binom{n-d}{d} 
\quad \text{where } \delta := \frac{2Kd^2}{n-2d+1},
$$
as long as $\delta \le 1/2$. According to our assumptions on the degree $d$, we do have $\delta \le 1/2$ when $n$ is sufficiently large.

Summarizing, we have shown that 
\begin{equation}	\label{eq: Sdiag}
\Sdiag \le  \binom{n}{d} \cdot \delta \binom{n-d}{d}
\lesssim \binom{n}{d}^2 \cdot \frac{Kd^2}{n}.
\end{equation}

\subsection{Off-diagonal contribution: the cross moments} 
Expanding the square, we can express the off-diagonal contribution 
in \eqref{***} as 
\begin{equation}	\label{eq: off-diagonal}
\Soffdiag = \sum_{\i \neq \j}  \sum_{\k \neq \l} A_{\i\j} A_{\k\l} \, \E \x_\i \x_\j \x_\k \x_\l.
\end{equation}
Let us first bound the expectation of 
$$
\x_\i \x_\j \x_\k \x_\l
= \prod_{i \in \i} x_i \prod_{i \in \i} x_j \prod_{k \in \k} x_k \prod_{l \in \l} x_l.
$$
Without loss of generality, we can assume that this monomial of degree $4d$
has no linear factors, i.e. each of the factors $x_\alpha$ of this monomial has degree at least $2$,
otherwise the expectation of the monomial is zero. 
Rearranging the factors, we can express the monomial as 
\begin{equation}	\label{eq: product with lambdas}
\x_\i \x_\j \x_\k \x_\l 
= \prod_{\alpha \in \Lambda_2} x_\alpha^2
	\prod_{\beta \in \Lambda_3} x_\beta^3
	\prod_{\gamma \in \Lambda_4} x_\gamma^4
\end{equation}
for some disjoint sets $\Lambda_2, \Lambda_3, \Lambda_4 \subset [n]$. 
Thus, $\Lambda_2$ consists of the indices that are covered by exactly two 
of the sets $\i, \j, \k, \l$, and similarly for $\Lambda_3$ and $\Lambda_4$.
Since each of the four sets $\i, \j, \k, \l$ contains $d$ indices, 
counting the indices with multiplicities gives
\begin{equation}	\label{eq: 4d}
4d = 2|\Lambda_2| + 3|\Lambda_3| + 4|\Lambda_4|.
\end{equation}

Since each index is covered at least by two of the four sets $\i, \j, \k, \l$, the cardinality of the set 
\begin{equation}	\label{eq: lambdas}
\i \cup \j \cup \k \cup \l = \Lambda_2 \sqcup \Lambda_3 \sqcup \Lambda_4
\end{equation}
is at most $4d/2 = 2d$. Let $w \ge 0$ be the ``defect'' defined by
\begin{equation}	\label{eq: defect}
\big| \i \cup \j \cup \k \cup \l \big| = 2d-w.
\end{equation}
Thus, $w$ would be zero if every index is covered by exactly two sets, 
and $w$ would be positive if there are triple or quadruple covered indices. 
From \eqref{eq: lambdas} and \eqref{eq: defect} we see that
$$
2d-w = |\Lambda_2| + |\Lambda_3| + |\Lambda_4|.
$$
Multiplying both sides of this equation by $2$ and subtracting from \eqref{eq: 4d}, 
we get 
\begin{equation}	\label{eq: L3L4}
2w = |\Lambda_3| + 2|\Lambda_4|,
\end{equation}
a relation that will be useful in a moment.

Take expectation on both sides of \eqref{eq: product with lambdas}. 
Using independence and the assumptions that $\E x_\alpha^2=1$ and 
$\E \x_\alpha^4 \le K$ for each $\alpha$, we get
$$
\E |\x_\i \x_\j \x_\k \x_\l|
= \prod_{\beta \in \Lambda_3} \E |x_\beta|^3 
	\cdot \prod_{\gamma \in \Lambda_4} \E x_\gamma^4
= \prod_{\beta \in \Lambda_3} \big( \E |x_\beta|^4 \Big)^{3/4}
	\cdot \prod_{\gamma \in \Lambda_4} \E x_\gamma^4
\le K^{\frac{3}{4} |\Lambda_3| + |\Lambda_4|}.
$$
Due to \eqref{eq: L3L4}, 
$$
\frac{3}{4} |\Lambda_3| + |\Lambda_4| 
= \frac{3}{2} w - \frac{1}{2} |\Lambda_4|
\le \frac{3}{2} w.
$$
Thus we have shown that 
$$
\E |\x_\i \x_\j \x_\k \x_\l| \le K^{3w/2}.
$$

\subsection{Sizes of intersections of meta-indices} 	\label{s: restrictions}

Due to the last step, the off-diagonal contribution \eqref{eq: off-diagonal} 
can be bounded as follows:
\begin{equation}	\label{eq: double sum with K}
\Soffdiag \le \sum_{\i \neq \j} \sum_{\k \neq \l} |A_{\i\j}| |A_{\k\l}| K^{3w/2},
\end{equation}
where the sum only includes the sets $\i,\j,\k,\l$ that provide at least a {\em double cover}, 
i.e. such that every index from $\i \cup \j \cup \k \cup \l$ must belong to at least two 
of these four sets. We quantified this property by the {\em defect} $w \ge 0$, which we defined by
$$
|\i \cup \j \cup \k \cup \l|
= |\i \cup \j \cup \k|
= 2d-w.
$$
In preparation to bounding the double sum in \eqref{eq: double sum with K}, let us consider
$$
|\i \cap \j| =: v, \quad |\i \cap \j \cap \k| =: r,
$$
and observe a few useful bounds involving $w$, $v$, and $r$. 

\begin{lemma}		\label{lem: wvd}
  We have $w \le v \le d-1$.
\end{lemma}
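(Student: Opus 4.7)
The plan is to dispatch the two inequalities separately, each by a short counting argument.

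For the bound $v \le d-1$, I would simply use that $\i$ and $\j$ are distinct $d$-element subsets of $[n]$. Two different sets of the same cardinality $d$ cannot contain each other, so $\i \cap \j$ is a proper subset of $\i$, giving $v = |\i \cap \j| \le |\i| - 1 = d - 1$.

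For the bound $w \le v$, I would use inclusion–exclusion on the three sets $\i, \j, \k$ together with the double-cover property that underlies the definition of the restricted sum in \eqref{eq: double sum with K}. Since every element of $\i \cup \j \cup \k \cup \l$ lies in at least two of the four sets, the indices in $\l$ must each also lie in $\i \cup \j \cup \k$, which justifies the equality $|\i \cup \j \cup \k \cup \l| = |\i \cup \j \cup \k|$ already written in the paper. Then
\[
|\i \cup \j \cup \k| \;\ge\; |\i \cup \j| \;=\; |\i| + |\j| - |\i \cap \j| \;=\; 2d - v.
\]
Combining with the definition $|\i \cup \j \cup \k| = 2d - w$ yields $2d - w \ge 2d - v$, i.e. $w \le v$.

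The main (minor) subtlety is making sure to invoke the double-cover restriction correctly when collapsing the four-set union to a three-set union; otherwise the argument is completely elementary and essentially a one-line consequence of inclusion–exclusion, so I do not expect any real obstacle.
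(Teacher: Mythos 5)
Your proposal is correct and follows essentially the same route as the paper: for $v \le d-1$ you use the distinctness of $\i$ and $\j$, and for $w \le v$ you compare $|\i \cup \j| = 2d - v$ with $|\i \cup \j \cup \k| = 2d - w$ via the monotonicity of union. The side remark about the double-cover property justifying $|\i \cup \j \cup \k \cup \l| = |\i \cup \j \cup \k|$ is correct but not strictly needed here, since the paper already takes $|\i \cup \j \cup \k| = 2d - w$ as part of the setup.
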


\begin{proof}
By definition, $v = |\i \cap \j| \le |\i| = d$. Moreover, $v$ may not equal $d$, for this would mean that 
$\i =\j$, a possibility that is excluded in the double sum \eqref{eq: double sum with K}.
This means that $v \le d-1$.
Next, we have
\begin{equation}	\label{eq: 2d-v}
|\i \cup \j|
= |\i| + |\j|- |\i \cap \j|
= 2d-v. 
\end{equation}
On the other hand, 
$|\i \cup \j| \le |\i \cup \j \cup \k| = 2d-w$. 
Combining these two facts yields $w \le v$.
\end{proof}

\begin{lemma}		\label{lem: rvw}
  We have $r \le v$ and $r \le 2w$.
\end{lemma}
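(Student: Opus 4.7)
The plan is to prove both inequalities by direct set-theoretic arguments, using the decomposition $\i \cup \j \cup \k \cup \l = \Lambda_2 \sqcup \Lambda_3 \sqcup \Lambda_4$ introduced above and the identity $|\Lambda_3| + 2|\Lambda_4| = 2w$ from \eqref{eq: L3L4}.

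The bound $r \le v$ is immediate from monotonicity of cardinality: since $\i \cap \j \cap \k \subseteq \i \cap \j$, taking cardinalities yields $r \le v$.

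For the bound $r \le 2w$, the key observation is that any index $\alpha \in \i \cap \j \cap \k$ lies in at least three of the four sets $\i, \j, \k, \l$, and therefore belongs to $\Lambda_3 \cup \Lambda_4$ (depending on whether $\alpha \in \l$ or not). This gives the inclusion $\i \cap \j \cap \k \subseteq \Lambda_3 \cup \Lambda_4$, and hence
$$
r \le |\Lambda_3| + |\Lambda_4| \le |\Lambda_3| + 2|\Lambda_4| = 2w,
$$
where the last equality is exactly \eqref{eq: L3L4}.

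There is no real obstacle here; both bounds are structural consequences of how $w$, $v$, and $r$ were defined. The only mildly subtle point is recognizing that the triple intersection $\i \cap \j \cap \k$ automatically consists of indices that are at least triply covered within the full collection $\{\i, \j, \k, \l\}$, which is what allows the defect $w$ to control $r$.
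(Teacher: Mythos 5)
Your proof is correct. The $r \le v$ part is identical to the paper's (pure monotonicity of cardinality under $\i \cap \j \cap \k \subseteq \i \cap \j$). For $r \le 2w$ you take a genuinely different route: you reuse the partition $\i \cup \j \cup \k \cup \l = \Lambda_2 \sqcup \Lambda_3 \sqcup \Lambda_4$ and the already-derived identity $2w = |\Lambda_3| + 2|\Lambda_4|$ from \eqref{eq: L3L4}, observing that any index in $\i \cap \j \cap \k$ is covered by at least three of the four sets and hence lands in $\Lambda_3 \sqcup \Lambda_4$, giving $r \le |\Lambda_3| + |\Lambda_4| \le 2w$. The paper instead runs a fresh multiplicity-counting argument from scratch: it notes that $\i,\j,\k,\l$ double-cover the full union and triple-cover $\i \cap \j \cap \k$, so $4d \ge 2|\i \cup \j \cup \k \cup \l| + |\i \cap \j \cap \k| = 2(2d-w) + r$, which rearranges to $r \le 2w$. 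The two arguments are morally the same double-counting in disguise, but yours is structurally tighter because it leans on \eqref{eq: L3L4} rather than re-deriving the bound from degree counting; the paper's version is self-contained and doesn't require the reader to have \eqref{eq: L3L4} in mind, which is a modest stylistic trade-off. Both are correct and of comparable length.
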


\begin{proof}
The first statement follows from definition. To prove the second statement, 
recall that the sets $\i,\j,\k,\l$ form at least a double cover of 
$\i \cup \j \cup \k \cup \l$ and at least a triple cover of $\i \cap \j \cap \k$ (trivially). 
Since each of the four sets has $d$ indices, counting the indices with multiplicities gives
$$
4d \ge 2 |\i \cup \j \cup \k \cup \l| + |\i \cap \j \cap \k|
= 2(2d-w) + r
$$
by the definition of $w$ and $r$.
This yields $r \le 2w$. 
\end{proof}

\begin{lemma}	\label{lem: rdvw}
  We have $r \le d-v+w$.
\end{lemma}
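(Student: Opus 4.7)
The plan is to derive $r \le d - v + w$ from a single inclusion-exclusion applied to $|\i \cup \j \cup \k|$, using the crucial fact that in the double-cover regime the fourth set $\l$ contributes nothing to the union.

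First I would record the observation that, because we only sum over configurations providing at least a double cover of $\i \cup \j \cup \k \cup \l$, every element of $\l$ must also lie in at least one of $\i, \j, \k$. Hence $\l \subset \i \cup \j \cup \k$, and therefore
\[
|\i \cup \j \cup \k| = |\i \cup \j \cup \k \cup \l| = 2d - w.
\]
This identity has already been used implicitly in the proof of Lemma~\ref{lem: wvd}, so it is safe to invoke.

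Next I would expand $|\i \cup \j \cup \k|$ by first merging $\i$ and $\j$ and then adding $\k$:
\[
|\i \cup \j \cup \k| = |\i \cup \j| + |\k| - |\k \cap (\i \cup \j)| = (2d - v) + d - |\k \cap (\i \cup \j)|,
\]
where I used \eqref{eq: 2d-v}. Combining this with the identity above gives the clean formula
\[
|\k \cap (\i \cup \j)| = d - v + w.
\]

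Finally, since $\i \cap \j \cap \k \subseteq \k \cap (\i \cup \j)$, monotonicity of cardinality yields
\[
r = |\i \cap \j \cap \k| \le |\k \cap (\i \cup \j)| = d - v + w,
\]
which is the desired bound. There is no real obstacle here: the only subtle point is justifying the reduction $|\i \cup \j \cup \k| = 2d - w$, which relies on the double-cover restriction imposed in Section~\ref{s: restrictions}; once that is in hand the rest is a two-line inclusion-exclusion.
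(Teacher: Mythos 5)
Your proof is correct and, up to bookkeeping, matches the paper's. The paper's main argument counts multiplicities (since each of $\i,\j,\k$ has $d$ elements and they give at least a single cover of $\i\cup\j\cup\k$, a double cover of $\i\cap\j$, and a triple cover of $\i\cap\j\cap\k$, one gets $3d \ge |\i\cup\j\cup\k|+|\i\cap\j|+|\i\cap\j\cap\k| = (2d-w)+v+r$), whereas you first compute $|\k\cap(\i\cup\j)|=d-v+w$ exactly by inclusion--exclusion and then use $\i\cap\j\cap\k\subseteq\k\cap(\i\cup\j)$; these are equivalent, and the paper itself records essentially your version in a footnote in Section~\ref{s: choices}, where the nonnegativity of $|(\i\triangle\j)\cap\k|=d-r-(v-w)$ is noted to give an alternative proof of the lemma.
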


\begin{proof}
The sets $\i$, $\j$, $\k$ obviously form at least a double cover of $\i \cap \j$ 
and a triple cover of $\i \cap \j \cap \k$.
Since each of the three sets has $d$ indices, counting the indices with multiplicities gives
$$
3d \ge |\i \cup \j \cup \k| + |\i \cap \j| + |\i \cap \j \cap \k|
= (2d-w) + v + r
$$
by definition of $w$, $v$ and $r$. 
Rearranging the terms completes the proof.
\end{proof}

\subsection{Number of choices of meta-indices} 			\label{s: choices}

Let us fix $w$, $v$, and $r$, and estimate the number of possible 
choices for the sets $\i$, $\j$, $\k$, $\l$ that conform to these $w$, $v$, and $r$. 
This would help us determining the number of terms in the double sum \eqref{eq: double sum with K}.
Thus, we would like to know how many ways are there to choose four $d$-element sets
$\i, \j, \k, \l \subset [n]$ that provide at least a double cover of $\i \cup \j \cup \k \cup \l$, 
and so that
\begin{equation}	\label{eq: ijk wvr}
|\i \cup \j \cup \k| = |\i \cup \j \cup \k \cup \l| = 2d-w, 
\quad |\i \cap \j| = v, 
\quad \text{and} 
\quad \quad |\i \cap \j \cap \k| = r.
\end{equation}

\subsubsection*{Choosing $\i$} This is easy: there are $\binom{n}{d}$ ways to choose
the $d$-element subset $\i$ from $[n]$.

\subsubsection*{Choosing $\j$}
Recall that we need to obey $|\i \cap \j| = v$. Thus, for a fixed $\i$, 
we have 
$\binom{d}{v} \binom{n-d}{d-v}$
choices for $\j$, which is seen by first picking the $v$ overlapping indices from 
$\i$ and then the remaining $d-v$ indices from $\i^c$. 

\subsubsection*{Choosing $\k$}
Let us fix $\i$ and $\j$.
The set of all available indices $[n]$, from which the indices of $\k$ can be chosen, 
can be partitioned into the three disjoint sets:
\begin{equation}	\label{eq: n partition}
[n] = (\i \cap \j) \sqcup (\i \cup \j)^c \sqcup (\i \triangle \j).
\end{equation}
Let us see how many indices for $\k$ should come from each of these three sets.

As we see from \eqref{eq: ijk wvr}, the $v$-element set $\i \cap \j$ must contain 
exactly $r$ indices of $k$, and these can be selected in $\binom{v}{r}$ ways. 

Next, we know from \eqref{eq: 2d-v} that $|(\i \cup \j)^c| = n-(2d-v)$, and
\begin{equation}	\label{eq: ijc k}
|(\i \cup \j)^c \cap \k|
= |\i \cup \j \cup \k| - |\i \cup \j|
= (2d-w) - (2d-v) = v-w,
\end{equation}
where we used \eqref{eq: ijk wvr} and \eqref{eq: 2d-v}.
So, the set $(\i \cup \j)^c$ must contain exactly $v-w$ indices of $\k$, 
and these can be selected in $\binom{n-(2d-v)}{v-w}$ ways.\footnote{Since 
the cardinality of any set is nonnegative, 
equation \eqref{eq: ijc k} provides an alternative proof of the bound $w \le v$ in Lemma~\ref{lem: wvd}.}

Finally, by \eqref{eq: 2d-v} and \eqref{eq: ijk wvr} we have
\begin{equation}	\label{eq: i sym j}
|\i \triangle \j| = |\i \cup \j| - |\i \cap \j|
= (2d-v) - v
= 2(d-v).
\end{equation}
We already allocated $r +(v-w)$ indices of $k$ to the first two sets on
the right-hand side of \eqref{eq: n partition}.  
Thus, the number of indices for $\k$ that come from the third set, $\i \triangle \j$, 
must be 
\begin{equation}	\label{eq: i sym j k}
|(\i \triangle \j) \cap \k| = d-r-(v-w).
\end{equation}
These indices can be selected in $\binom{2(d-v)}{d-r-(v-w)}$ ways.\footnote{Since 
the cardinality of any set is nonnegative, 
equation \eqref{eq: i sym j k} provides an alternative proof of Lemma~\ref{lem: rdvw}.}

Summarizing, for fixed $\i$ and $\j$, we have 
$\binom{v}{r} \binom{n-(2d-v)}{v-w} \binom{2(d-v)}{d-r-(v-w)}$ choices for $\k$. 

\subsubsection*{Choosing $\l$}
Fix $\i$, $\j$ and $\k$. 
Recall that the sets $\i$, $\j$, $\k$, $\l$ must form at least a double cover of $\i \cup \j \cup \k \cup \l$. 
This has two consequences. First, we must have
\begin{equation}	\label{eq: l in ijk}
\l \subset \i \cup \j \cup \k
\end{equation}
to avoid any single-covered indices in $\l$. Second, 
$\l$ must contain all the {\em single indices}, i.e. those that belong 
to exactly one of the sets $\i$, $\j$, or $\k$. The set of single indices, denoted $\s$, 
can be represented as 
$$
\s = (\i^c \cap \j^c \cap \k) \sqcup \big[ (\i \cap \j^c \cap \k^c) \sqcup (\i^c \cap \j \cap \k^c) \big]
= \big[ (\i \cup \j)^c \cap \k \big] \sqcup \big[ (\i \triangle \j) \cap \k^c \big].
$$
At this stage, the sets $\i$, $\j$ and $\k$ are all fixed, and so is $\s$. 

To compute the cardinality of $\s$, recall from \eqref{eq: ijc k} that 
$|(\i \cup \j)^c \cap \k| = v-w$. Furthermore, using \eqref{eq: i sym j} and \eqref{eq: i sym j k}, we see that
$$
|(\i \triangle \j) \cap \k^c| 
= |(\i \triangle \j)| - |(\i \triangle \j) \cap \k| 
= 2(d-v) - (d-r-(v-w)) 
= d-w-v+r.
$$
Thus, the number of single indices is
$$
|\s| = (v-w) + (d-w-v+r) = d-2w+r.
$$

Since $\l$ must contain the set $\s$ of single indices, which is fixed, 
the only freedom in choosing $\l$ comes from selecting non-single indices. 
There are $d-(d-2w+r) = 2w-r$ of them,\footnote{Since the number of indices is non-negative, 
this provides an alternative proof of the bound $r \le 2w$ in Lemma~\ref{lem: rvw}.} 
and they must come from the set $(\i \cup \j \cup \k) \setminus \s$, due to \eqref{eq: l in ijk}.
Now, recalling \eqref{eq: ijk wvr}, we have
$$
|(\i \cup \j \cup \k) \setminus \s| 
= |\i \cup \j \cup \k| - |\s| 
= (2d-w) - (d-2w+r)
= d+w-r.
$$
Hence, for fixed $\i$, $\j$ and $\k$, we have $\binom{d+w-r}{2w-r}$ choices for $\l$.

\subsection{Bounding the off-diagonal contribution by a binomial sum}

We can now return to our bound \eqref{eq: double sum with K}
on the off-diagonal contribution. We can rewrite it as follows:
\begin{equation}	\label{eq: 5sum}
\Soffdiag \le \sum_{w,v,r} K^{3w/2} \sum_{\i \in \I} \sum_{\j \in \J(\i)} \sum_{\k \in \K(\i,\j)} \sum_{\l \in \L(\i,\j,\k)} |A_{\i\j}| |A_{\k\l}|.
\end{equation}
The first sum is over all realizable $v$, $w$, and $r$, and 
the rest of the sums are over all possible choices for $\i$, $\j$, $\k$ and $\l$ 
that conform to the given $v$, $w$ and $r$ per \eqref{eq: ijk wvr}. 
Thus, for instance, $\L(\i,\j,\k)$ consists of all possible choices for $\l$ 
given $\i$,$\j$ and $\k$. 
We observed various bounds on realizable $v$, $w$ and $r$ in Section~\ref{s: restrictions}, 
and we computed the cardinalities of the sets $\I$, $\J(\i)$, $\K(\i,\j)$ and 
$\L(\i,\j,\k)$ in Section~\ref{s: choices}. This knowledge will help us to bound 
the five-fold sum in \eqref{eq: 5sum}.

In order to do this, rewrite \eqref{eq: 5sum} as follows:
$$
\Soffdiag \le \sum_{w,v,r} K^{3w/2} \sum_{\i \in \I} \sum_{\j \in \J(\i)} |A_{\i\j}| \sum_{\k \in \K(\i,\j)} \sum_{\l \in \L(\i,\j,\k)} |A_{\k\l}|.
$$
Note that $|A_{\k\l}| \le \|A\| =1$ for all $\k$ and $\l$, and 
$$
\sum_{\j \in \J(\i)} |A_{\i\j}| 
\le |\J(\i)|^{1/2} \bigg( \sum_{\j \in \J(\i)} A_{\i\j}^2 \bigg)^{1/2}
\le |\J(\i)|^{1/2} \|A\|
= |\J(\i)|^{1/2}.
$$
Thus 
$$
\Soffdiag \le \sum_{w,v,r} K^{3w/2} |\I| \cdot \max_{\i} |\J(\i)|^{1/2} \cdot \max_{\i,\j} |\K(\i,\j)| \cdot \max_{\i,\j,\k} |\L(\i,\j,\k)|.
$$
Now we can use the bounds we proved in Section~\ref{s: choices} 
on the cardinalities of sets $\I$, $\J(\i)$, $\K(\i,\j)$ and $\L(\i,\j,\k)$, 
which are the number of choices for $\i$, for $\j$ given $\i$, for $\k$ given $\i,\j$, and for $\l$ given $\i,\j,\k$.  
We obtain
\begin{align} 
\Soffdiag
\le \sum_{w,v,r} K^{3w/2} & \binom{n}{d} \binom{d}{v}^{1/2} \binom{n-d}{d-v}^{1/2} 
	 \binom{v}{r} \binom{n-(2d-v)}{v-w} \binom{2(d-v)}{d-r-(v-w)}
	 \binom{d+w-r}{2w-r} \nonumber\\
&\le \binom{n}{d} \sum_{w,v,r} K^{3w/2} B_1 B_2 B_3 B_4 B_5 B_6,	\label{eq: Bsum}
\end{align}
where $B_m = B_m(n,d,w,v,r)$ denote the corresponding factors in this expression; 
for example $B_2 = \binom{n-d}{d-v}^{1/2}$.

\subsection{The terms of the binomial sum}

Let us observe a few bounds on the factors $B_m$. 
First, 
\begin{equation}	\label{eq: B5}
B_5 \le 2^{2(d-v)}
\end{equation}
due to the inequality $\binom{m}{k} \le 2^m$. 

Next, since $v \le d+w-r$ by Lemma~\ref{lem: rdvw}, we have
$B_3 = \binom{v}{r} \le \binom{d+w-r}{r}$.
Combining this with $B_6 = \binom{d+w-r}{2w-r}$, we get
$$
B_3 B_6 
\le \binom{d+w-r}{r}  \binom{d+w-r}{2w-r}
\le \binom{d+w-r}{w}^2. 
$$
Here we used the log-concavity property of binomial coefficients, see Lemma~\ref{lem: log-concavity} in the appendix. 
Furthermore, we have $w \le d$ by Lemma~\ref{lem: wvd} and $r \ge 0$, so 
\begin{equation}	\label{eq: B3B6}
B_3 B_6 
\le \binom{2d}{w}^2
\le (2ed)^{2w},
\end{equation}
where we used an elementary bound from Lemma~\ref{lem: binomial bounds} in the last step. 

Next, using the decay of the binomial coefficients (Lemma~\ref{lem: decay}), we get
$$
B_4 
\le \binom{n-(2d-v)}{v-w}
\le \bigg( \frac{v}{n-2d+1} \bigg)^w \binom{n-(2d-v)}{v}.
$$
Now recall that $v \le d$ (Lemma~\ref{lem: wvd}) and note 
that our assumption on $d$ with a sufficiently small constant $c$ implies $d \le n/4$. Thus 
$$
B_4 \le \bigg( \frac{2d}{n} \bigg)^w \binom{n-(2d-v)}{v}.
$$
This expression can be conveniently combined with $B_2^2 = \binom{n-d}{d-v}$, since
$$
B_2^2 B_4 
\le \bigg( \frac{2d}{n} \bigg)^w \binom{n-d}{d-v} \binom{n-(2d-v)}{v}
= \bigg( \frac{2d}{n} \bigg)^w \binom{d}{v} \binom{n-d}{d},
$$
The last identity can be easily checked by expressing the binomial coefficients in terms
of factorials. This expression in turn can be conveniently combined with $B_1 = \binom{d}{v}^{1/2}$, and we get
\begin{equation}	\label{eq: B124}
B_1 B_2 B_4 = B_1 \cdot \frac{B_2^2 B_4}{B_2}
= \bigg( \frac{2d}{n} \bigg)^w \binom{n-d}{d} \cdot \frac{\binom{d}{v}^{3/2}}{\binom{n-d}{d-v}^{1/2}}.
\end{equation}
Now, using the elementary binomial bounds (Lemma~\ref{lem: binomial bounds}), we obtain 
$$
\frac{\binom{d}{v}^{3/2}}{\binom{n-d}{d-v}^{1/2}}
= \frac{\binom{d}{d-v}^{3/2}}{\binom{n-d}{d-v}^{1/2}}
\le \bigg( \frac{e^{3/2} d^{3/2}}{(d-v)(n-d)^{1/2}} \bigg)^{d-v}
\le \bigg( \frac{C_1 d^{3/2}}{n^{1/2}} \bigg)^{d-v}.
$$
In the last step we used that $d-v \ge 1$ by Lemma~\ref{lem: wvd} and that $d \le n/2$,
which follows from our assumption on $d$ if the constant $c$ is chosen sufficiently small.
Recall that by $C_1$, $C_2$, etc. we denote suitable absolute constants. 
Returning to \eqref{eq: B124}, we have shown that 
\begin{equation}	\label{eq: B1B2B4}
B_1 B_2 B_4 
\le  \bigg( \frac{2d}{n} \bigg)^w \binom{n}{d} \bigg( \frac{C_1 d^{3/2}}{n^{1/2}} \bigg)^{d-v}.
\end{equation}

\subsection{The final bound on the off-diagonal contribution}

We can now combine our bounds \eqref{eq: B5}, \eqref{eq: B3B6} and \eqref{eq: B1B2B4} on $B_i$ 
and put them into \eqref{eq: Bsum}. We obtain
$$
\Soffdiag \le \binom{n}{d} \sum_{w,v,r} K^{3w/2} B_5 \cdot B_3 B_6 \cdot B_1 B_2 B_4
\le \binom{n}{d}^2 \sum_{w,v,r} \bigg( \frac{C_2 d^3 K^{3/2}}{n} \bigg)^w 
	\bigg( \frac{C_3 d^{3/2}}{n^{1/2}} \bigg)^{d-v}.
$$
Recall from Lemma~\ref{lem: rvw} that $0 \le r \le 2w$, thus the sum over $r$ includes 
at most $2w+1$ terms. Similarly, Lemma~\ref{lem: wvd} determines the ranges for the other two sums,
namely $0 \le w,v \le d-1$.
Hence 
\begin{equation}	\label{eq: S two sums}
\Soffdiag \le \binom{n}{d}^2 \; \sum_{w=0}^{d-1} (2w+1) \bigg( \frac{C_2 d^3 K^{3/2}}{n} \bigg)^w \cdot \sum_{v=0}^{d-1} \bigg( \frac{C_3 d^{3/2}}{n^{1/2}} \bigg)^{d-v}.
\end{equation}

The sums over $w$ and $v$ in the right hand side of \eqref{eq: S two sums} 
can be easily estimated. To handle the sum over $w$, we can use the identity $\sum_{k=0}^\infty k z^k = z/(1-z)^2$, which is valid for all $z \in (0,1)$. Thus, the sum over $w$ is bounded by an absolute constant, as long as $C_2 d^3 K^{3/2}/n \le 1/2$. 
The latter restriction holds by our assumption on $d$ with a sufficiently small constant $c$.

Similarly, the sum over $v$ in the right hand side of \eqref{eq: S two sums} 
is a partial sum of a geometric series. It is dominated by the leading term, 
i.e. the term where $v=d-1$. Hence
this sum is bounded by $C_4 d^{3/2} / n^{1/2}$, as long as 
$C_3 d^{3/2} / n^{1/2} \le 1/2$.
The latter restriction holds by our assumption on $d$ with a sufficiently small constant $c$.

Summarizing, we obtained the following 
bound on the off-diagonal contribution \eqref{eq: off-diagonal}:
$$
\Soffdiag \lesssim \binom{n}{d}^2 \; \frac{d^{3/2}}{n^{1/2}}.
$$

Combining this with the bound \eqref{eq: Sdiag} on the diagonal contribution
and plugging into \eqref{eq: diagonal tensor}, we conclude that
$$
\E \big[ |\x^\tran A \x - \tr A |^2 \big] 
\lesssim \binom{n}{d}^2 \cdot \frac{Kd^2}{n} + \binom{n}{d}^2 \; \frac{d^{3/2}}{n^{1/2}}
\lesssim \binom{n}{d}^2 \cdot \frac{K^{3/4} d^{3/2}}{n^{1/2}}.
$$
In the last step, we used the assumption that $d \lesssim K^{-1/2} n^{1/3}$.
The proof of Theorem~\ref{thm: variance random tensor} is complete.
\qed

\section*{Appendix. Elementary bounds on binomial coefficients}

Here we record some bounds on binomial coefficients used throughout the paper. 

\begin{lemma}[see e.g. Exercise~0.0.5 in \cite{V book}]	\label{lem: binomial bounds}
  For any integers $1 \le d \le n$, we have:
  $$
  \Big( \frac{n}{d} \Big)^d 
  \le \binom{n}{d} 
  \le \sum_{k=0}^d \binom{n}{ k} 
  \le \Big( \frac{en}{d} \Big)^d.
  $$
\end{lemma}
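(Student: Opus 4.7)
The lemma has three inequalities, and my plan is to handle each separately. The middle inequality $\binom{n}{d} \le \sum_{k=0}^d \binom{n}{k}$ is immediate because every $\binom{n}{k}$ is a nonnegative integer, so only the outer two inequalities need real work. There is no deep obstacle here; the interesting step is a standard generating-function/Bernstein trick for the upper bound, and I expect that to be the most subtle point, although it is routine.

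For the lower bound, I would write
$$
\binom{n}{d} = \prod_{i=0}^{d-1} \frac{n-i}{d-i}
$$
and show that each factor satisfies $(n-i)/(d-i) \ge n/d$ whenever $n \ge d$. The factor equals $n/d$ when $i=0$, and for $i \ge 1$ the inequality $(n-i)d \ge (d-i)n$ simplifies to $i(n-d) \ge 0$, which holds because $n \ge d$. Multiplying the $d$ factors then yields $\binom{n}{d} \ge (n/d)^d$.

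For the upper bound, I would use the classical device of tilting the binomial sum. For any $t \in (0,1]$ and any $k \le d$, we have $t^{k-d} \ge 1$, hence
$$
\sum_{k=0}^d \binom{n}{k}
\le t^{-d} \sum_{k=0}^d \binom{n}{k} t^k
\le t^{-d} \sum_{k=0}^n \binom{n}{k} t^k
= t^{-d}(1+t)^n.
$$
Choosing $t = d/n$, which lies in $(0,1]$ since $1 \le d \le n$, gives $t^{-d} = (n/d)^d$, while $(1+d/n)^n \le e^d$ by the standard inequality $1+x \le e^x$. Multiplying these yields the desired $(en/d)^d$ bound, completing the chain. The only thing worth being careful about is the edge case $d=n$, where $t=1$ and both inequalities in the display remain valid, so the argument goes through uniformly.
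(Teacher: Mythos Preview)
Your argument is correct in all three parts: the middle inequality is trivial, the lower bound follows cleanly from the factor-by-factor comparison $(n-i)/(d-i)\ge n/d$, and the upper bound is the standard tilting/Chernoff trick, with the edge case $d=n$ handled properly. The paper does not actually supply a proof of this lemma---it merely cites it as an exercise in \cite{V book}---so there is nothing to compare against; your write-up is exactly the intended standard proof.
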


\begin{lemma}[Log-concavity of binomial coefficients]		\label{lem: log-concavity}
  We have
  $$
  \binom{a}{b-c} \binom{a}{b+c} \le \binom{a}{b}^2.
  $$
  for all positive integers $a$, $b$ and $c$ for which the binomial coefficients are defined. 
\end{lemma}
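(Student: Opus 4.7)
The plan is to establish the inequality directly by expressing the ratio $\binom{a}{b-c}\binom{a}{b+c}/\binom{a}{b}^2$ in terms of factorials, after which it factors into a product of elementary ratios each bounded by $1$. This avoids any induction and makes the inequality termwise transparent.

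First I would cancel the $a!$ factors. Writing $\binom{a}{k} = a!/[k!(a-k)!]$, the claim reduces to
$$
\frac{b!^2\,(a-b)!^2}{(b-c)!\,(b+c)!\,(a-b-c)!\,(a-b+c)!} \le 1.
$$
Next I would telescope the factorials to rewrite each of the two factors as a product over $i=1,\ldots,c$. A direct calculation yields
$$
\frac{b!^2}{(b-c)!\,(b+c)!} = \prod_{i=1}^{c} \frac{b-c+i}{b+i}
\qquad \text{and} \qquad
\frac{(a-b)!^2}{(a-b-c)!\,(a-b+c)!} = \prod_{i=1}^{c} \frac{a-b-c+i}{a-b+i}.
$$
Since $c \ge 0$, every factor on the right is at most $1$ (we have $b-c+i \le b+i$ and $a-b-c+i \le a-b+i$ for each $i$), and the positivity of the denominators is guaranteed by the hypothesis that $0 \le b-c$ and $b+c \le a$. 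Taking products preserves the inequality, so the combined product is at most $1$, which gives the lemma.

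No substantive obstacle is expected; the only observation needed is the factorization into elementary ratios. An alternative route would be an induction on $c$: defining $f(k) := \binom{a}{b+k}\binom{a}{b-k}$, one reduces the step $f(k+1) \le f(k)$ to the arithmetic inequality $(a-b-k)(b-k) \le (b+k+1)(a-b+k+1)$, which expands to $a(m+1-n) + (m+1-n) \ge 0$ with $m = b+k$, $n = b-k$, and holds trivially. However, the direct factorization approach above is cleaner and avoids separate base-case bookkeeping, so that is the one I would actually record in the proof.
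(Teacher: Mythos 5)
Your proposal is correct and follows essentially the same route as the paper: both express the ratio $\binom{a}{b-c}\binom{a}{b+c}/\binom{a}{b}^2$ via factorials and observe that it factors into two products of $c$ elementary ratios, each termwise at most $1$. The only cosmetic difference is that you write the products explicitly as $\prod_{i=1}^c$, whereas the paper leaves them implicit.
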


\begin{proof}
Expressing the binomial coefficients in terms of factorials, we have
$$
\frac{\binom{a}{b-c} \binom{a}{b+c}}{\binom{a}{b}^2}
= \frac{b!/(b-c)!}{(b+c)!/b!} 
\cdot \frac{(a-b)!/(a-b-c)!}{(a-b+c)!/(a-b)!}
$$
Examining the first fraction in the right hand side, we find that both the numerator
and denominator consist of $c$ terms. Each term in the numerator is bounded by 
the corresponding terms in the denominator. Thus the fraction is bounded by $1$. 
We argue similarly for the second fraction, and thus the entire quantity is bounded by $1$. 
\end{proof}

\begin{lemma}[Decay of binomial coefficients]		\label{lem: decay}
  For any positive integers $s \le t \le m$, we have
  $$
  \binom{m}{t-s} \le \bigg( \frac{t}{m-t+1} \bigg)^s \binom{m}{t}.
  $$
\end{lemma}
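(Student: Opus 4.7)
The plan is to prove the inequality by writing the ratio $\binom{m}{t-s}/\binom{m}{t}$ explicitly as a product of $s$ fractions and bounding each factor separately. Expressing the binomial coefficients in terms of factorials, one finds
$$
\frac{\binom{m}{t-s}}{\binom{m}{t}}
= \frac{t!\,(m-t)!}{(t-s)!\,(m-t+s)!}
= \prod_{j=0}^{s-1} \frac{t-j}{m-t+s-j}.
$$
So the whole statement reduces to showing that this product of $s$ fractions is at most $\bigl( t/(m-t+1) \bigr)^s$.

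The key observation is that each factor in the product satisfies the desired bound on the nose: for every $0 \le j \le s-1$, the numerator satisfies $t-j \le t$, while the denominator satisfies $m-t+s-j \ge m-t+1$ (since $s-j \ge 1$). Consequently each factor is bounded by $t/(m-t+1)$, and multiplying $s$ such bounds together gives exactly the claimed inequality.

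An equivalent route, which I would present as an alternative if a more structural proof is preferred, is induction on $s$. The base case $s=0$ is the trivial identity $\binom{m}{t}=\binom{m}{t}$. For the inductive step, one uses the elementary identity
$$
\binom{m}{t-s-1} = \binom{m}{t-s} \cdot \frac{t-s}{m-t+s+1},
$$
and notes that $(t-s)/(m-t+s+1) \le t/(m-t+1)$ by the same monotonicity argument as above. Combining this with the inductive hypothesis yields the claim for $s+1$.

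There is essentially no obstacle here: the only thing to watch is the direction of the inequalities in the numerator versus denominator, i.e.\ that pairing the largest admissible numerator with the smallest admissible denominator gives an upper bound on each factor. Both approaches are short and entirely elementary, so I would favour the direct factorization argument for brevity.
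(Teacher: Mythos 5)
Your direct factorization argument is correct and is essentially identical to the paper's proof: both write $\binom{m}{t-s}/\binom{m}{t}$ as the ratio $\frac{t(t-1)\cdots(t-s+1)}{(m-t+s)(m-t+s-1)\cdots(m-t+1)}$ and bound it by $t^s/(m-t+1)^s$, either factor by factor or by bounding the numerator and denominator products wholesale. The inductive variant you mention is a fine alternative but offers no advantage here.
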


\begin{proof}
The definition of binomial coefficients gives
$$
\frac{\binom{m}{t-s}}{\binom{m}{t}}
= \frac{t (t-1) \cdots (t-s+1)}{(m-t+s) (m-t+s-1) \cdots (m-t+1)}
\le \frac{t^s}{(m-t+1)^s}.
$$
\end{proof}

\begin{lemma}[Stability of binomial coefficients]		\label{lem: stability}
  For any positive integers $m$, $p$ and $t \le m$, we have 
  $$
  \binom{m+p}{t} \le (1+\delta) \binom{m}{t}
  \quad \text{where } \delta := \frac{2tp}{m+1-t},
  $$
  as long as $\delta \le 1/2$.
\end{lemma}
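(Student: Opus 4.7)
The plan is to express the ratio $\binom{m+p}{t}/\binom{m}{t}$ as a telescoping product of $p$ simple factors, bound each factor uniformly, and then convert the resulting exponential-type bound into the linear bound $1+\delta$ using the assumption $\delta \le 1/2$.

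Writing both binomial coefficients in terms of factorials, I would first observe that
$$
\frac{\binom{m+p}{t}}{\binom{m}{t}} = \frac{(m+p)!\,(m-t)!}{m!\,(m+p-t)!} = \prod_{j=1}^{p} \frac{m+j}{m+j-t},
$$
since the numerator and denominator each reduce to a product of $p$ consecutive integers, shifted apart by $t$. Rewriting each factor as $1 + \tfrac{t}{m+j-t}$ and using $j \ge 1$ to dominate the denominator, every factor is bounded above by $1 + \alpha$, where $\alpha := \tfrac{t}{m+1-t}$. Note that $\alpha p = \delta/2$ by the definition of $\delta$.

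Multiplying the $p$ factors and passing to the exponential gives
$$
\frac{\binom{m+p}{t}}{\binom{m}{t}} \le (1+\alpha)^p \le e^{\alpha p} = e^{\delta/2}.
$$
The assumption $\delta \le 1/2$ makes the exponent at most $1/4$, which lies comfortably in the regime where the elementary inequality $e^y \le 1 + 2y$ holds (this inequality is valid on all of $[0,1]$ and can be verified by checking the endpoint $y=1$ together with convexity of $e^y - 1 - 2y$, or directly from the Taylor expansion). Applied with $y = \delta/2$, this yields $e^{\delta/2} \le 1+\delta$, which is precisely the claim.

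There is essentially no obstacle here; the proof is a short calculation. The only point requiring any care is ensuring that the passage from the multiplicative bound $(1+\alpha)^p$ to the additive bound $1+\delta$ is made in a regime where $e^{\delta/2} \le 1+\delta$, which is exactly what the hypothesis $\delta \le 1/2$ guarantees. The constant $2$ in the definition of $\delta$ is precisely chosen to make this conversion work.
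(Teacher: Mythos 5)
Your proof is correct and follows essentially the same path as the paper's: express the ratio $\binom{m+p}{t}/\binom{m}{t}$ as the telescoping product $\prod_{j=1}^p\bigl(1+\tfrac{t}{m-t+j}\bigr)$, bound each factor by $1+\tfrac{t}{m-t+1}$, and then use $(1+\epsilon)^p \le e^{\epsilon p} \le 1+2\epsilon p$ in the regime $\epsilon p \le 1$ to pass from the exponential to the linear bound $1+\delta$. The only cosmetic difference is that you verify $e^y \le 1+2y$ on $[0,1]$ a bit more explicitly, but the structure and the role of the hypothesis $\delta \le 1/2$ are identical.
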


\begin{proof}
The definition of binomial coefficients gives
$$
\frac{\binom{m+p}{t}}{\binom{m}{t}}
= \prod_{k=1}^p \bigg( 1 + \frac{t}{m-t+k} \bigg)
\le \bigg( 1 + \frac{t}{m-t+1} \bigg)^p.
$$
Now use the bound $(1+\epsilon)^p \le e^{\epsilon p} \le 1 + 2\epsilon p$,
which holds as long as $\epsilon p \in [0, 1]$. 
\end{proof}

\section{Numerical Experiments}
\label{sec:num}
We present a few numerical experiments to verify that the empirical spectral densities for the block-independent model and the random tensor model tend to the Marchenko-Pastur laws.  In all of our tests, the numerical results are computed from a single realization, i.e. we did not average over multiple trials.  

{\bf Block-independent model experiments:}
In Figure \ref{fig:blocks} we show the empirical spectral densities for four experiments of block-independent matrices; in each case, they align very well with the corresponding Marchenko-Pastur density.  In Figure \ref{fig:block_gaussian}, the columns of $X \in \mathbb R^{4000 \times 16000}$ consist of $n=2000$ blocks, each of length $d=2$ where the first entry of the block is $z \sim N(0,1)$ and the second entry is $\frac{1}{\sqrt{2}}(z^2-1)$.  Thus the second entry is completely determined via a formula of the first entry.  While this matrix has half the amount of randomness as an i.i.d. matrix of the same size, it still follows the same limiting distribution as the i.i.d. matrix.  We see the densities match up very well even for these relatively small sized matrices.  In Figure \ref{fig:block_xor}, the columns of $X \in \mathbb R^{1800 \times 12600}$ consist of $n=600$ blocks each of length $d=3$ where the first and second entry of the block are $\pm \frac{1}{2}$ each with probability $\frac{1}{2}$ and the third entry is a shifted XOR of the first and second (i.e. the third entry is $\frac{1}{2}$ if the first and second entries have opposite signs and it is $-\frac{1}{2}$ if the first and second entries have the same sign).  In this case the variance of the entries is $\frac{1}{4}$, so it matches up with Marchenko-Pastur density with covariance matrix $\Sigma = \frac{1}{4} I$ and $\lambda = \frac{1}{7}$.  In Figure \ref{fig:block_small_n}, the columns of matrix $X \in \mathbb R^{7000 \times 21000}$ have $n=10$ blocks, where each block is length $d=700$ and is of the form $\pm \sqrt{d} e_i $ for $i$ selected uniformly from $[d]$, where $\{e_i\}_{i=1}^d \in \mathbb R^d$ are the standard basis vectors in $\mathbb R^d$.  This example shows that with the exchangeability criteria, it is possible for $n \ll d$.  Additionally, we see the two densities agree very well, despite only having $n=10$ blocks.  Similar to Figure \ref{fig:block_small_n}, in Figure \ref{fig:block_n_is_d} the columns of matrix $X \in \mathbb R^{6400 \times 12800}$ have $n=80$ blocks, where each block is length $d=80$ and is of the form $\pm \sqrt{d} e_i $ for $i$ selected uniformly from $[d]$.  These figures and other experiments together suggest that having $n \geq 10$ and dimensions in the low thousands is enough for the empirical spectral density of a block-independent model matrix to align quite well with the corresponding Marchenko-Pastur density.

{\bf Random tensor model experiments:}
In Figures \ref{fig:2tensors} and \ref{fig:3tensors}, we look at vectorized 2-tensors and 3-tensors ($d=2$ and $d=3$ respectively).  We see that the fourth moment of the entries appears to be important for the speed of convergence as $n \rightarrow \infty$.  For both the 2-tensors and 3-tensors we consider three types of entries in the vector that we will tensor with itself: 1) the entries are Bernoulli $\pm 1$ each with probability half - these entries have fourth moment of 1; 2) the entries are Uniform on $[-\sqrt{3}, \sqrt{3}]$ - these entries have fourth moment of $\frac{9}{5}$; 3) the entries are standard normal - these entries have fourth moment of 3.  In Figure \ref{fig:2tensors} we compare the the empirical spectral density for 2-tensors with the corresponding Marchenko-Pastur density using $n=145$.  We see that the two densities match up quite well, and match up better when the entries had smaller fourth moments.  We do the same experiments for 3-tensors in Figure \ref{fig:3tensors} except now using $n=45$, since $n=145$ is too computationally costly as it would have $\binom{145}{3} \approx 500,000$ rows.  We see that the two densities match up quite well for the Bernoulli entry case, not very well for the uniform entry case, and very poorly for the standard normal case.  These figures suggest there may even be a different limiting law for small values of $n$.  Testing $n=100$ does show (Figure \ref{fig:3tensors_supercomputer}) that the empirical densities are getting closer to the Marchenko-Pastur density as $n$ increases.  These experiments show that while the limiting density does tend to the the Marchenko-Pastur density, they do not align very well for small values of $n$ and the rate of convergence likely depends upon the largest fourth moment of the random vector.

    \begin{figure*}[h]
        \centering
        \begin{subfigure}[b]{0.475\textwidth}   
            \centering 
            \includegraphics[width=\textwidth]{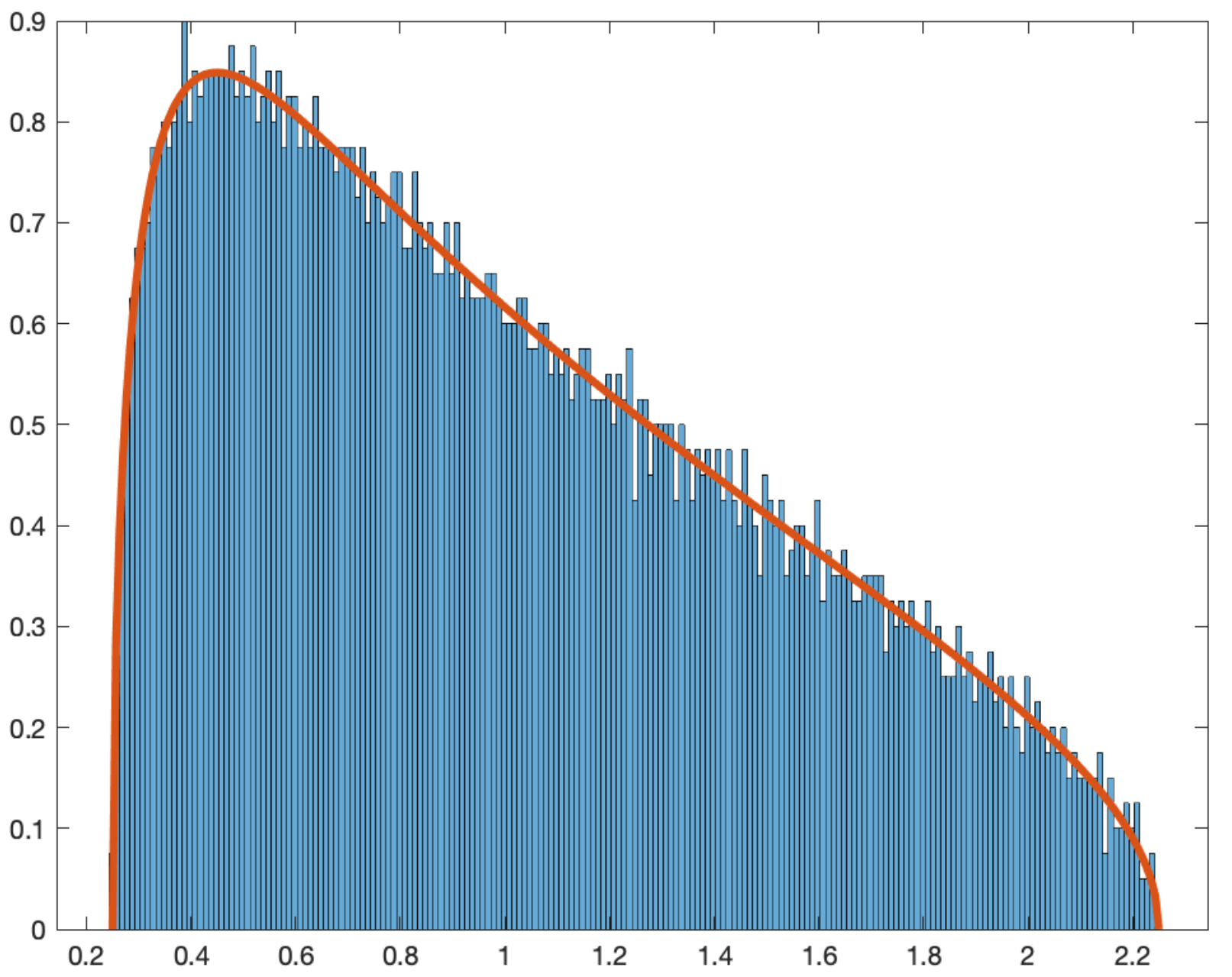}
            \caption[]%
            {{\small }}    
            \label{fig:block_gaussian}
        \end{subfigure}
        \quad
        \begin{subfigure}[b]{0.475\textwidth}   
            \centering 
            \includegraphics[width=\textwidth]{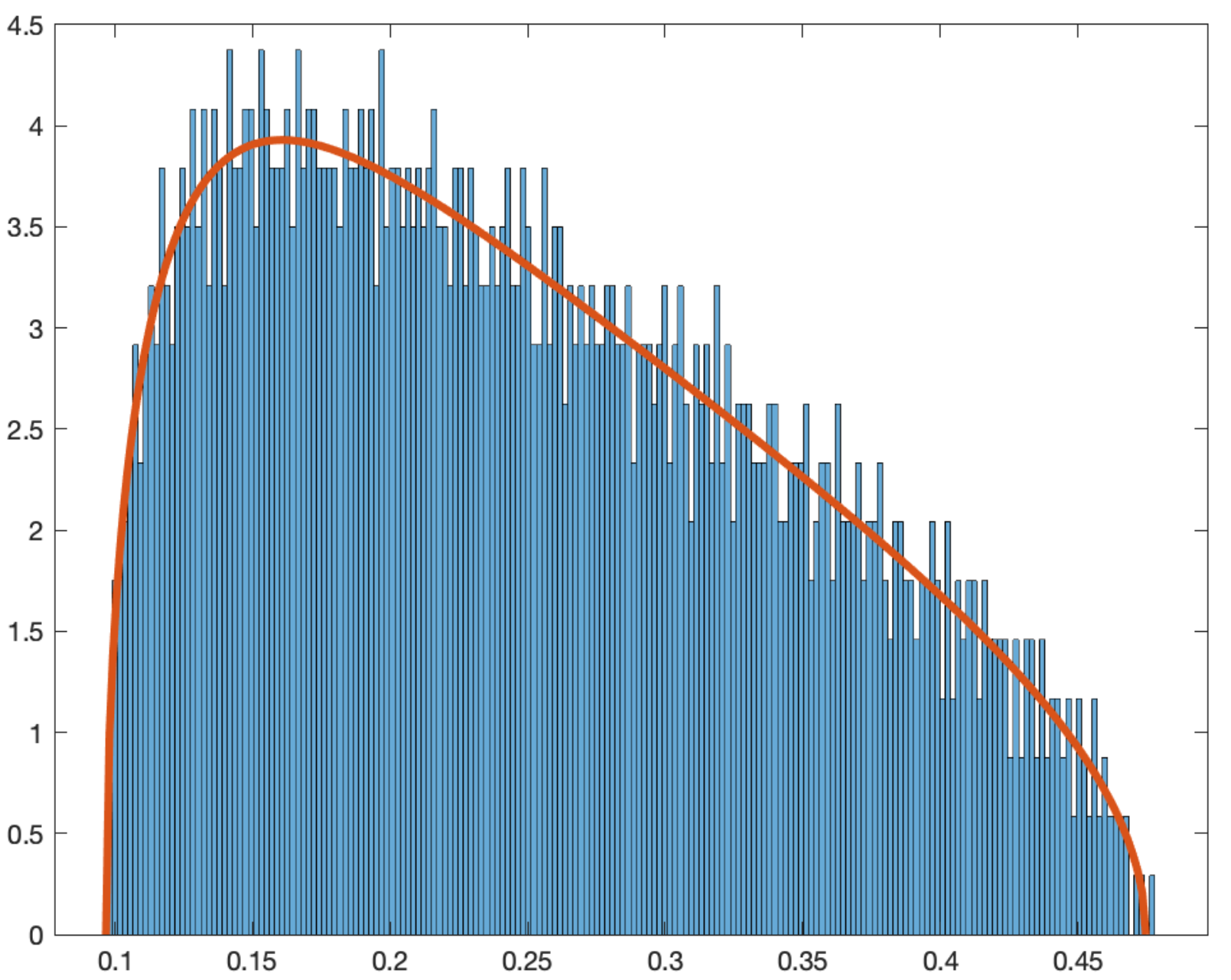}
            \caption[]%
            {{\small }}    
            \label{fig:block_xor}
        \end{subfigure}
        \begin{subfigure}[b]{0.495\textwidth}
            \centering
            \includegraphics[width=\textwidth]{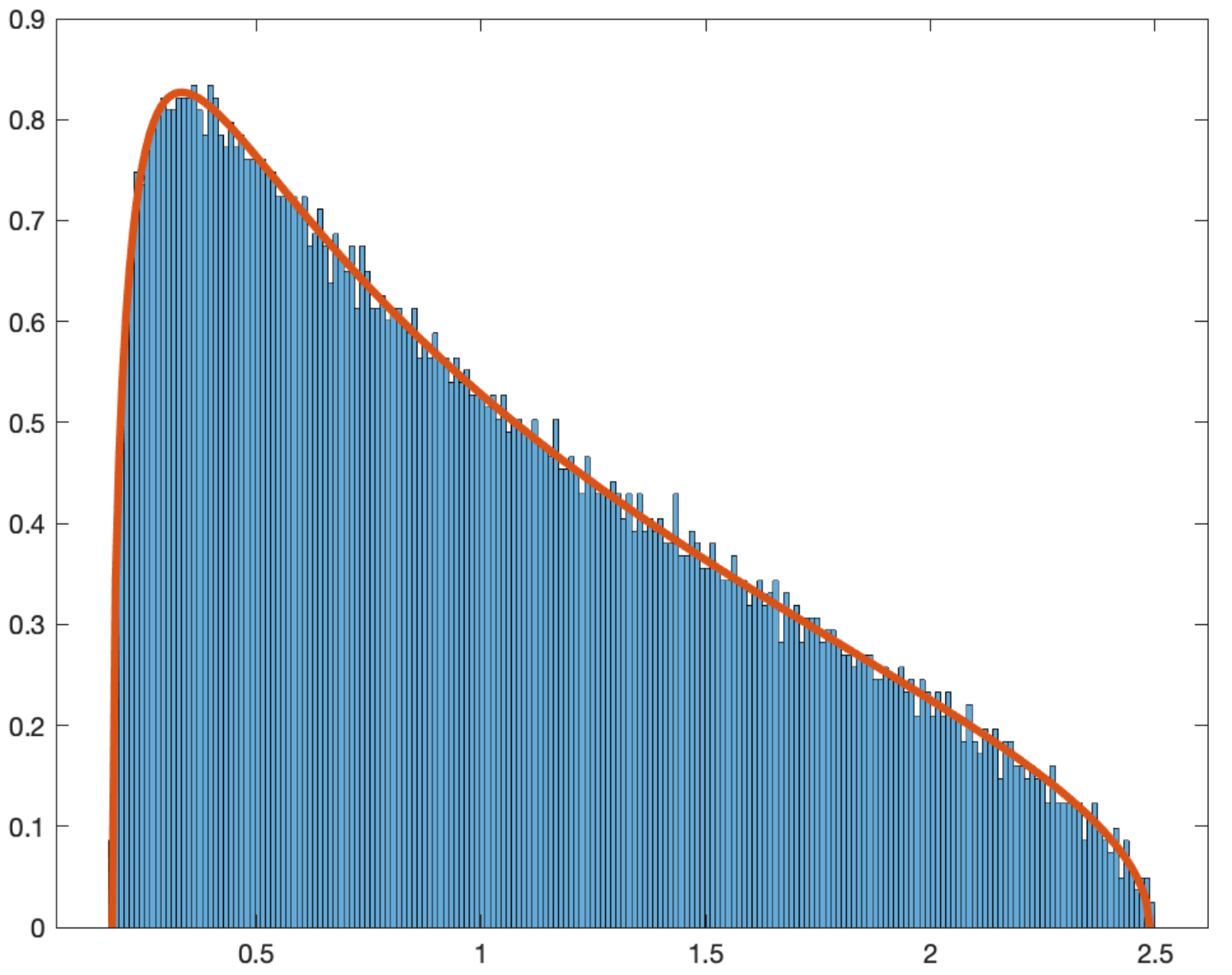}
            \caption[]%
            {{\small }}    
            \label{fig:block_small_n}
        \end{subfigure}
        \hfill
        \begin{subfigure}[b]{0.495\textwidth}  
            \centering 
            \includegraphics[width=\textwidth]{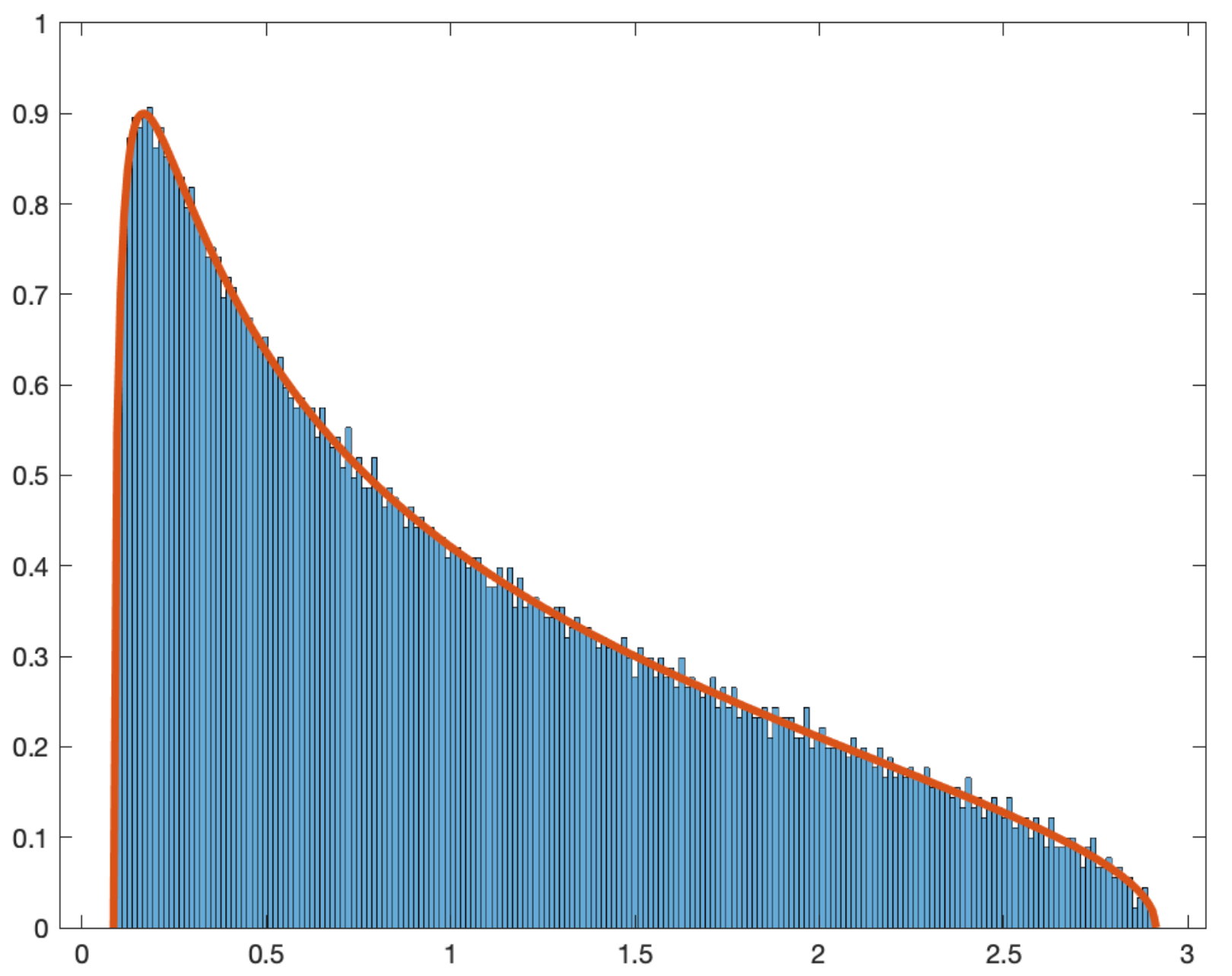}
            \caption[]%
            {{\small }}    
            \label{fig:block_n_is_d}
        \end{subfigure}
        \caption{ The Marchenko-Pastur density (red curve) vs. empirical spectral density for block-independent matrices described in Section \ref{sec:num}.}  
        \label{fig:blocks}
    \end{figure*}

\begin{figure}[h]
\centering

\begin{subfigure}[b]{0.64\textwidth}
   \includegraphics[width=1\linewidth]{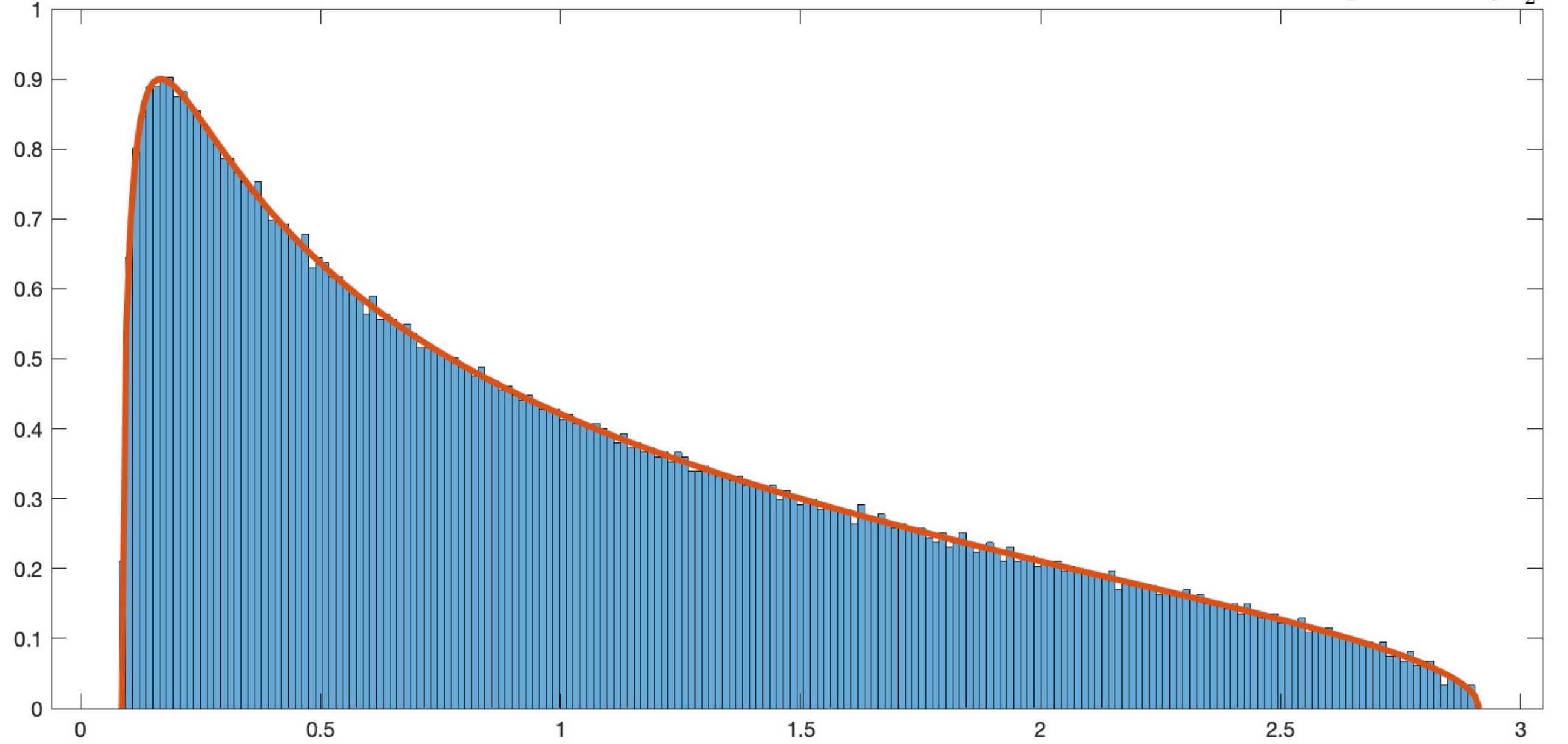}
   \caption{}
   \label{fig:2tensor145_bern} 
\end{subfigure}

\vspace{1mm}
\begin{subfigure}[b]{0.64\textwidth}
   \includegraphics[width=1\linewidth]{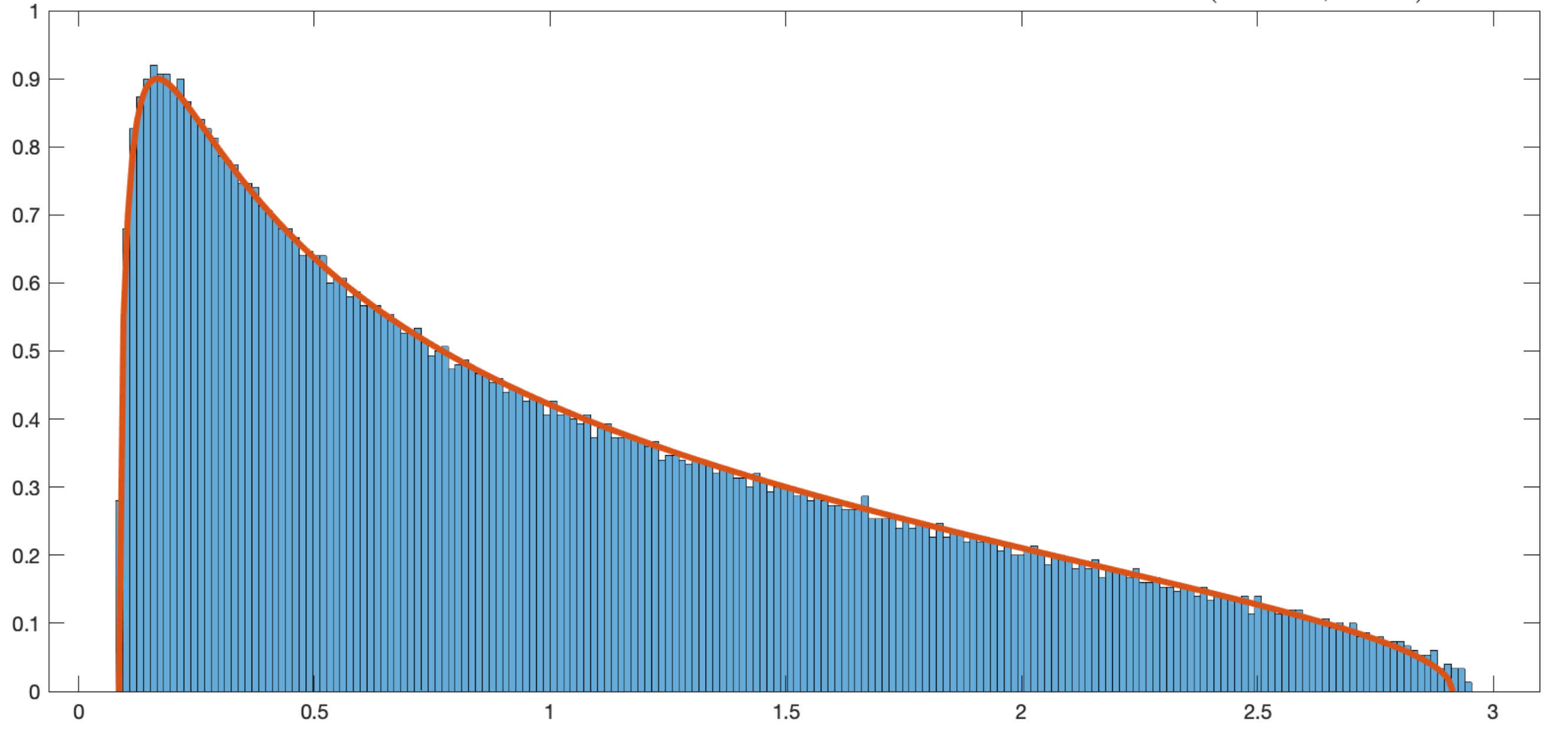}
   \caption{}
   \label{fig:2tensor145_uniform}
\end{subfigure}

\vspace{1mm}
\begin{subfigure}[b]{0.64\textwidth}
   \includegraphics[width=1\linewidth]{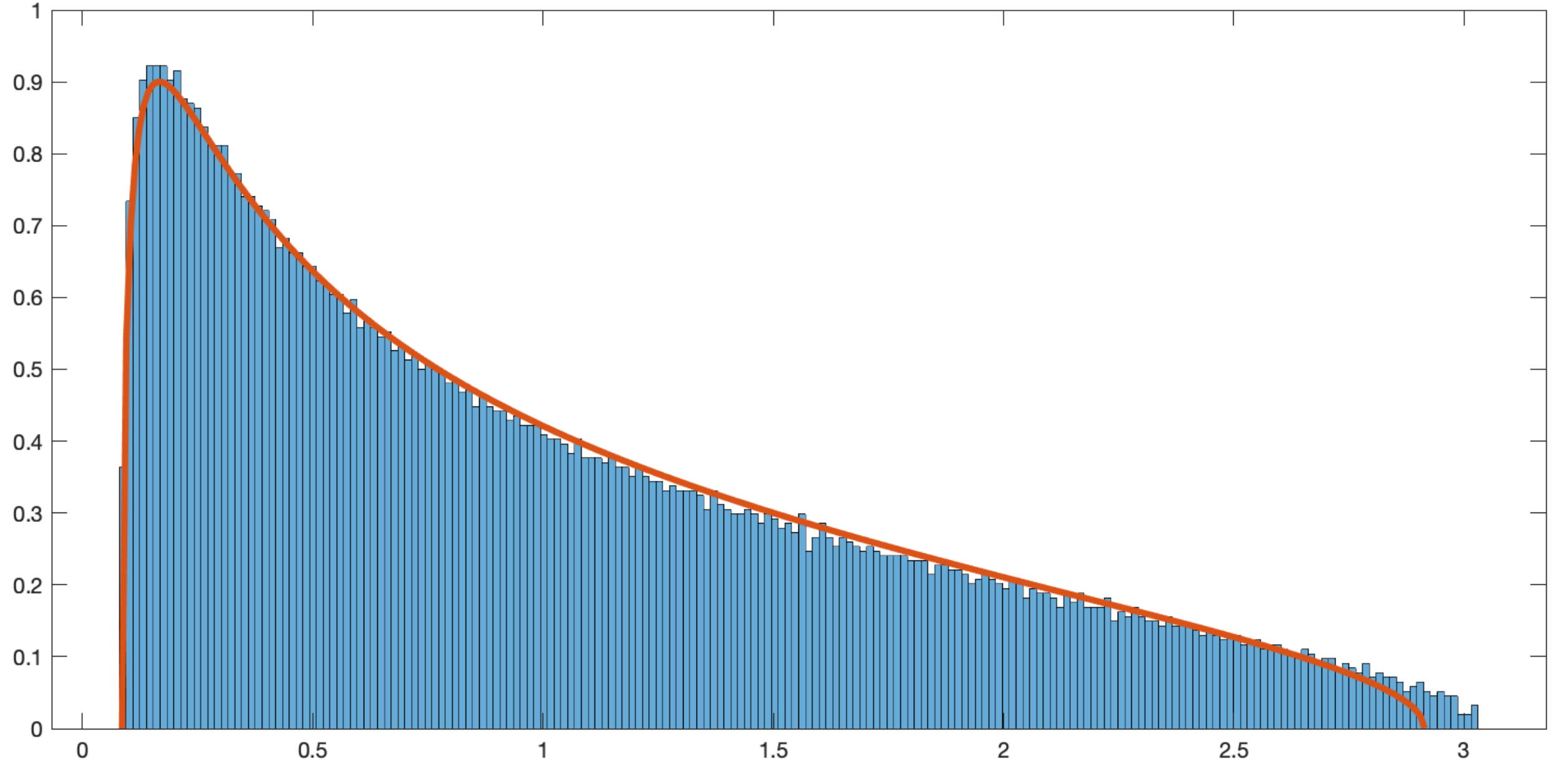}
   \caption{}
   \label{fig:2tensor145_normal}
\end{subfigure}
\caption{The Marchenko-Pastur density (red curve) vs. empirical spectral density for matrices in $\mathbb R^{\binom{145}{2} \times 2 \binom{145}{2}}$ whose columns are random 2-tensors as described in Section \ref{sec:num}.}  
\label{fig:2tensors}
\end{figure}
   
\begin{figure}[h]
\centering
	
\begin{subfigure}[b]{0.62\textwidth}
   \includegraphics[width=1\linewidth]{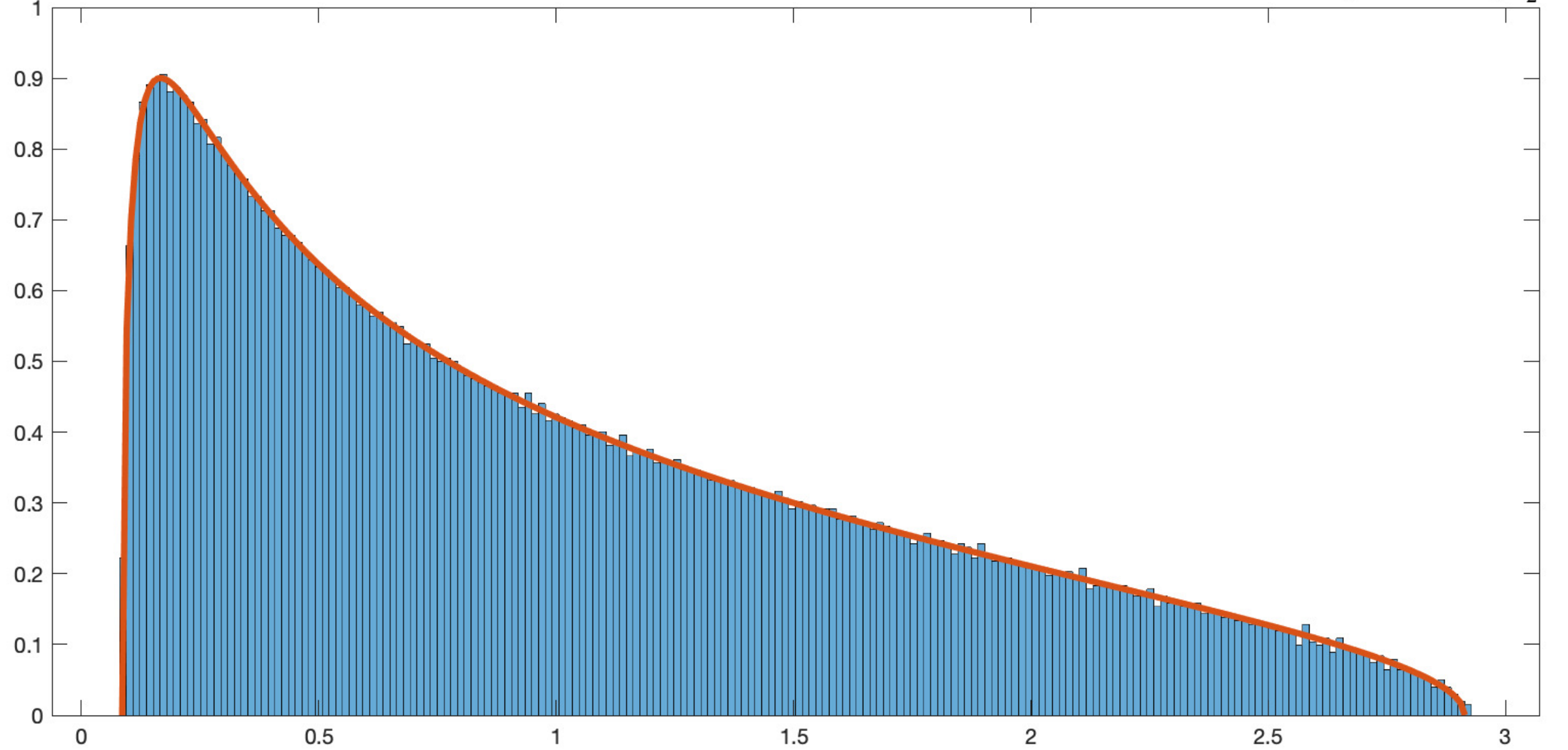}
   \caption{}
   \label{fig:2tensor145_bern} 
\end{subfigure}

\vspace{1mm}

\begin{subfigure}[b]{0.62\textwidth}
   \includegraphics[width=1\linewidth]{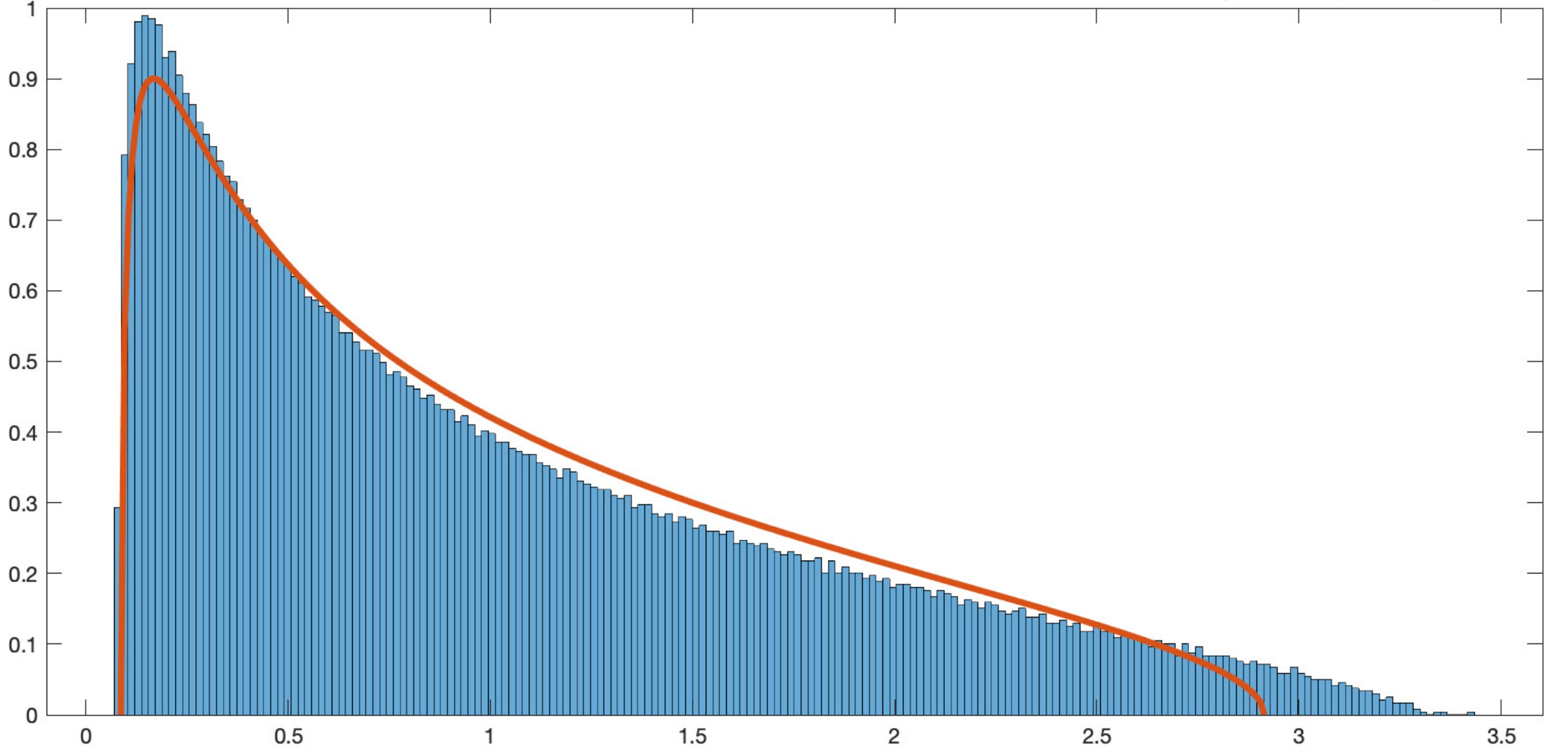}
   \caption{}
   \label{fig:2tensor145_uniform}
\end{subfigure}

\vspace{1mm}

\begin{subfigure}[b]{0.62\textwidth}
   \includegraphics[width=1\linewidth]{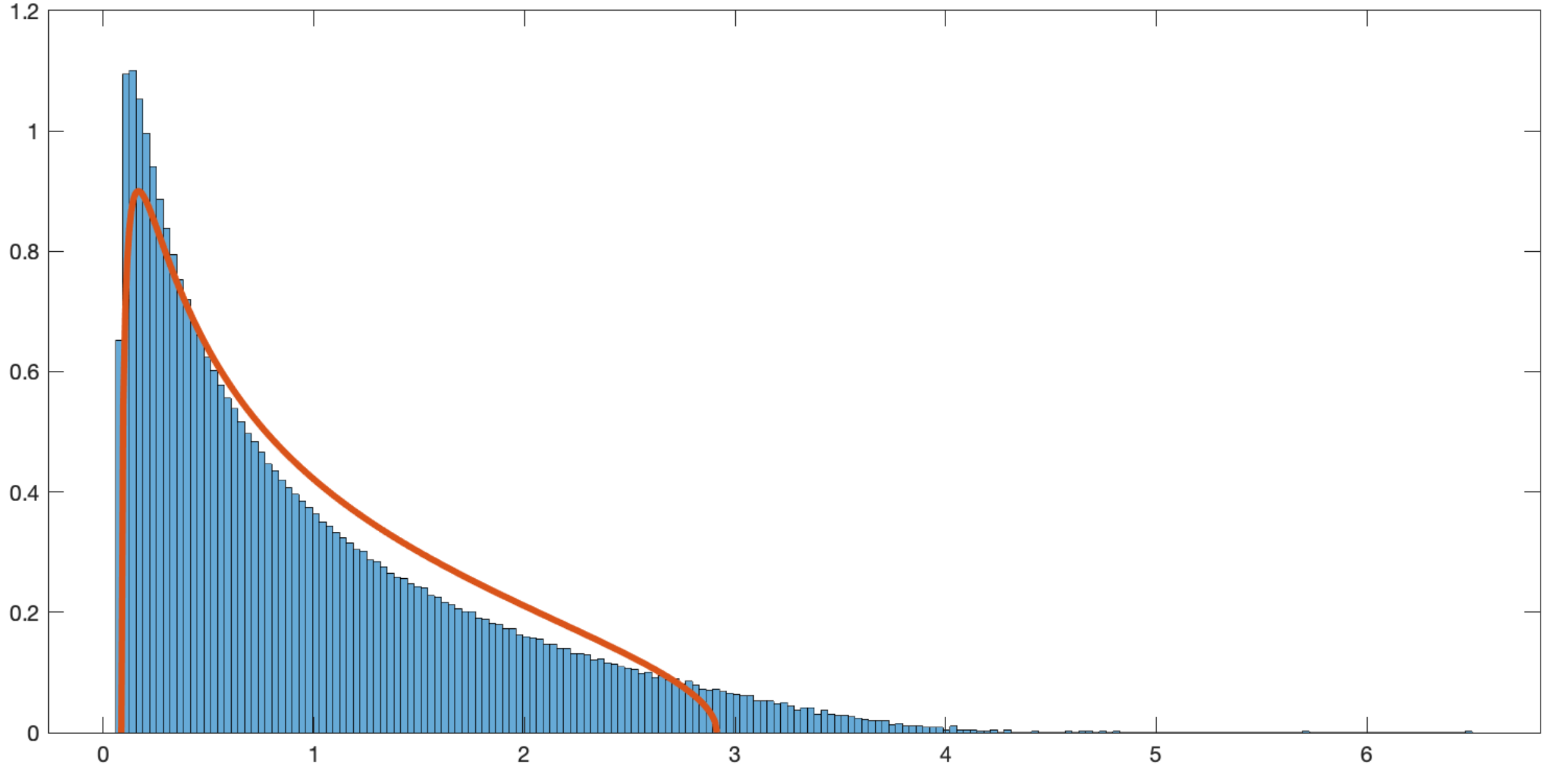}
   \caption{}
   \label{fig:2tensor145_normal}
\end{subfigure}

\caption{The Marchenko-Pastur density (red curve) vs. empirical spectral density for matrices in $\mathbb R^{\binom{45}{3} \times 2 \binom{45}{3}}$ whose columns are random 3-tensors as described in Section \ref{sec:num}.} 
\label{fig:3tensors}
\end{figure}
 
\begin{figure}[h]
   \includegraphics[width=0.8\linewidth]{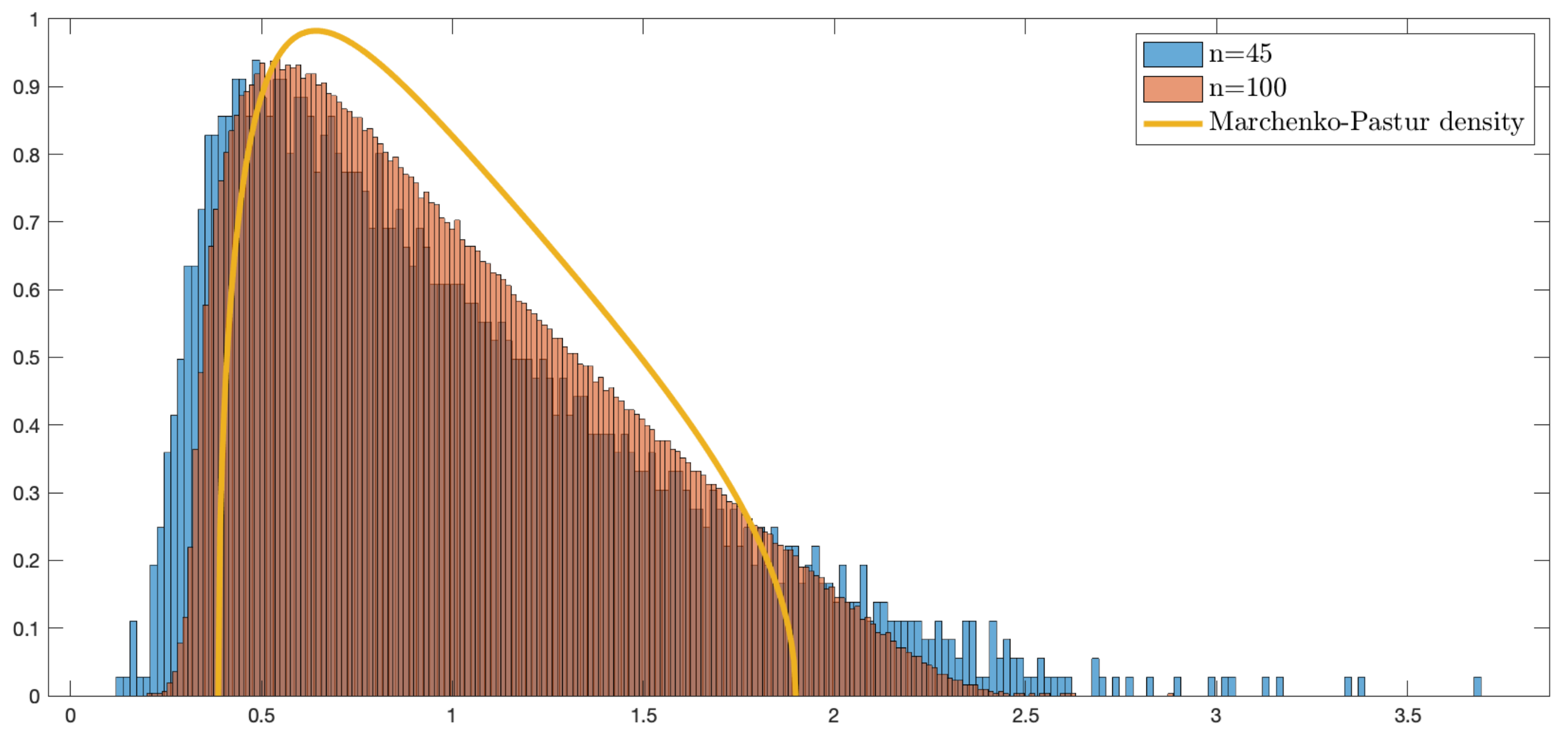}
   \caption{Empirical spectral density of $\frac{1}{\binom{n}{3}}XX^T $, where columns of $X^T \in \mathbb R^{\binom{n}{3} \times \frac{1}{7} \binom{n}{3}}$  are 3-tensors of a random vector in $\mathbb R^{n}$ with entries uniform on $[-\sqrt{3}, \sqrt{3}]$ as described in Section \ref{sec:num}.}
   \label{fig:3tensors_supercomputer}
\end{figure}


\end{document}